\let\oldmarginpar\marginpar
\renewcommand\marginpar[1]{\oldmarginpar[\raggedleft\footnotesize #1]%
{\raggedright\footnotesize #1}}
\renewcommand*{\backref}[1]{}
\renewcommand*{\backrefalt}[4]{
  \ifcase #1 %
   [No citations.]%
  \or
   [#2]%
  \else
   [#2]%
  \fi
}
\newtheorem{theorem}{Theorem}[section]
\newtheorem{lemma}[theorem]{Lemma}
\newtheorem{corollary}[theorem]{Corollary}
\newtheorem{proposition}[theorem]{Proposition}
\newtheorem*{claim}{Claim}
\newtheorem*{namedtheorem}{\theoremname}
\newcommand{\theoremname}{testing}
\theoremstyle{definition}
\newtheorem{remark}[theorem]{Remark}
\newtheorem{definition}[theorem]{Definition}
\newcommand{\refthm}[1]{Theorem~\ref{Thm:#1}}
\newcommand{\reflem}[1]{Lemma~\ref{Lem:#1}}
\newcommand{\refprop}[1]{Proposition~\ref{Prop:#1}}
\newcommand{\refcor}[1]{Corollary~\ref{Cor:#1}}
\newcommand{\refdef}[1]{Definition~\ref{Def:#1}}
\newcommand{\refsec}[1]{Section~\ref{Sec:#1}}
\newcommand{\reffig}[1]{Figure~\ref{Fig:#1}}
\newcommand{\reftab}[1]{Table~\ref{Tab:#1}}
\newcommand{\ZZ}{\mathbb{Z}}
\newcommand{\RR}{\mathbb{R}}
\newcommand{\FF}{\mathcal{F}}
\renewcommand{\setminus}{{\smallsetminus}}
\newcommand{\bdy}{\partial}
\def\OO{\mathcal{O}} 
\def\PP{\mathcal{P}}
\definecolor{light-gray}{gray}{0.6}
\newcommand{\OK}{\OO_K}
\newcommand{\Omin}{\OO_{\rm{min}}}
\newcommand{\quotient}[2]{{\raisebox{0.2em}{$#1$}
           \!\!\left/\raisebox{-0.2em}{\!$#2$}\right.}}
\def\vol{\mathrm{Vol}} 
\def\area{\mathrm{Area}}
\def\Isom{\mathrm{Isom}}
\def\area{\mathrm{Area}}
\def\min{\mathrm{min}}
\def\exp{\mathrm{exp}}  
\def\arccosh{\mathrm{arccosh}}
\def\HH{\mathbb{H}}
\def\FF{\mathcal{F}}
\begin{document}

\title{The lowest volume $3$--orbifolds with high torsion}
\author{Christopher K. Atkinson}
\address{Division of Science and Mathematics, 
University of Minnesota Morris,
Morris, MN 56267}
\email{catkinso@morris.umn.edu}

\author{David Futer}
\address{Department of Mathematics, Temple University,
Philadelphia, PA 19122}
\email{dfuter@temple.edu}
\thanks{Futer was supported in part by NSF grant DMS--1408682 and the Elinor Lunder Founders' Circle Membership at the Institute for Advanced Study.}
\date{\today}

\begin{abstract} 
For each natural number $n \geq 4$, we determine the unique lowest volume hyperbolic $3$--orbifold whose torsion orders are bounded below by $n$. This lowest volume orbifold has base space the $3$--sphere and singular locus the figure--$8$ knot, marked $n$. We apply this result to give sharp lower bounds on the volume of a hyperbolic manifold in terms of the order of elements in its symmetry group.
\end{abstract}

\maketitle

\section{introduction}

The volume of a hyperbolic manifold is a fundamental and very useful topological invariant. 
J{\o}rgensen  and Thurston showed that the
set of volumes of hyperbolic $3$--manifolds forms a closed, non-discrete, well--ordered subset
of $\RR^+$ \cite{thurston:notes}. 
Dunbar and Meyerhoff proved that the set of volumes of hyperbolic
$3$--orbifolds --- namely, quotients of manifolds by a discrete group action --- has the same properties \cite{dunbar-meyerhoff}.  It follows that any
family of hyperbolic manifolds or orbifolds contains a finite number
of members realizing the smallest volume.

In the past decade, volume minimizers have been found for several important families.   Gabai, Meyerhoff, and Milley \cite{gmm:smallest-cusped, milley} proved that the smallest volume orientable hyperbolic
$3$--manifold is the Weeks manifold $M_W$, with $\vol(M_W) \approx 0.9427$. Gehring, Marshall, and Martin  \cite{gehring-martin:minimal-orbifold, marshall-martin:minimal-orbifold2} have identified the smallest volume orientable $3$--orbifold $\Omin$, with $\vol(\Omin) \approx 0.03905$. There are also known results about non-compact manifolds and orbifolds \cite{adams:gieseking, adams:small-volume-orbifolds, cao-meyerhoff, meyerhoff:small-orbifold}, fibered manfiolds \cite{aaber-dunfield}, and several other families \cite{adams:limit-volume-orbifolds, agol:2cusped, atkinson-futer}.

In this paper, we identify the volume minimizers in infinitely many distinct families of orbifolds: namely, the family $L_n$ of hyperbolic $3$--orbifolds whose torsion orders are bounded below by $n$. For each $n \geq 4$, we prove that the unique smallest-volume orbifold in family $L_n$ is the figure--$8$ knot marked $n$, as in \reffig{examples}. See \refthm{main} below. In addition, we show that the volume of a hyperbolic $3$--manifold $M$ is bounded below by a constant times the size of its symmetry group $G$, where the constant depends on the orders of elements in $G$. 
These bounds are sharp in infinite families of examples.

\subsection{Orbifolds}
A $3$--dimensional, orientable \emph{orbifold} $\OO$ is locally modeled on $B^3/G$,
where $B^3$ is the closed $3$--ball and $G\subset SO(3)$ is a finite group
acting on $B^3$ by rotations.  The underlying topological space or \emph{base
space} is denoted $X_{\OO}$.  If a point in $B^3$ is fixed by a non-trivial
element of $G$, the quotient of this point in $X_{\OO}$ belongs to the
\emph{singular locus} $\Sigma_{\OO}$.

All manifolds and all orbifolds appearing in this paper are assumed to be orientable.
For a $3$--orbifold $\OO$, the base space $X_{\OO}$ is a
$3$--manifold, possibly with boundary. The singular locus $\Sigma_{\OO}$ is an 
embedded graph whose vertices in
the interior of $X_{\OO}$ have valence $3$, and whose vertices on $\bdy
X_{\OO}$ have valence $1$. A regular neighborhood of a point in the interior of an
edge of $\Sigma_{\OO}$ is the quotient of $B^3$ under $\ZZ_n$, a finite cyclic group
of rotations.  The integer $n\geq 2$ is called the \emph{torsion order}
of the edge. A regular neighborhood of a trivalent vertex $v \in \Sigma_{\OO}$ is the
quotient of $B^3$ under an (orientation preserving) spherical triangle group.

A $3$--orbifold is called \emph{hyperbolic} if $\OO = \HH^3 /\Gamma$, where $\Gamma$
is a discrete subgroup of $\Isom^+ (\HH^3)$, possibly with torsion.  In parallel to the case where $\OO$ is 
a manifold, the
group $\Gamma$ is called the \emph{orbifold fundamental group of $\OO$} and denoted
$\pi_1(\OO)$. For hyperbolic orbifolds, the singular locus $\Sigma_\OO$ is the quotient of fixed points of torsion elements of $\Gamma$.

\begin{definition}
For $n\geq2$, let $L_n$ denote the set of all orientable hyperbolic
$3$--orbifolds with non-empty singular locus and with all torsion orders
bounded below by $n$.  Thus 
\[ L_2 \supset L_3 \supset L_4 \supset \ldots
\quad \mbox{and} \quad \bigcap L_n = \emptyset. \]
\end{definition}

Being a member of $L_n$ imposes some additional structure. For instance,
an orbifold with torsion orders bounded below by $ n \geq 3$ is automatically orientable, because orientation-reversing isometries have order $2$. When $n \geq 4$ and $\OO \in L_n$, \reflem{LargeNLink} shows that the singular locus $\Sigma_\OO$ is a closed $1$--manifold, hence $\OO$ is a \emph{link orbifold} as in \refdef{link-orbifold}.

The main result of this paper identifies the volume minimizer in $L_n$ for each $n \geq 4$.

\begin{figure}
\begin{overpic}[width=1.4in]{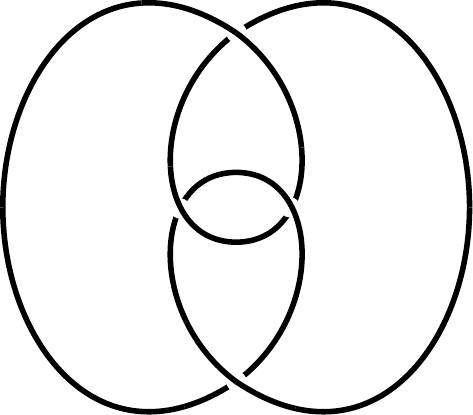}
\put(90,0){$n$}
\end{overpic}
\hspace{1in}
\begin{overpic}[width=1.6in]{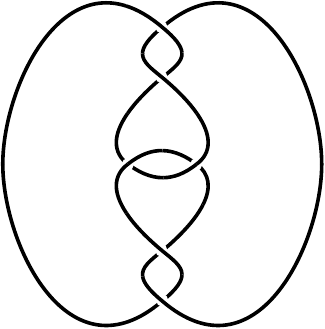}
\put(87,0){$3$}
\end{overpic}
  \caption{Left: the orbifold $\PP_n$, with singular locus the figure--$8$ knot and torsion label $n$. By \refthm{main}, $\PP_n$ is the unique volume minimizer among hyperbolic orbifolds with torsion orders bounded below by $n \geq 4$.
    Right: the orbifold $\OO_K$, with singular locus the $5_2$ knot and torsion label $3$.}
  \label{Fig:examples}
\end{figure}

\begin{theorem}\label{Thm:main}
For all $n\geq 4$, the unique lowest--volume element
of $L_n$ is the orbifold $\PP_n$,
with base space $S^3$ and singular locus the
figure--$8$ knot labeled $n$. See \reffig{examples}, left.
\end{theorem}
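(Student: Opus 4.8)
The plan is to remove the singular locus, compare $\vol(\OO)$ with the volume of the resulting cusped manifold by running a cone-angle deformation, and then invoke the classification of small-volume cusped hyperbolic $3$--manifolds to force the drilling to be the figure--$8$ knot complement and $\OO$ to be $\PP_n$. So fix $\OO \in L_n$ with $n \ge 4$ and $\vol(\OO) \le \vol(\PP_n)$; the goal is to prove $\OO = \PP_n$. By \reflem{LargeNLink} the singular locus $\Sigma_\OO$ is a closed $1$--manifold, so $\OO$ is a link orbifold in the sense of \refdef{link-orbifold}, and $N := X_\OO \setminus \Sigma_\OO$ is a finite-volume $3$--manifold with one extra rank--$2$ cusp for each component of $\Sigma_\OO$. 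The first step is to observe that $N$ is hyperbolic: a reducing sphere, an essential torus, or a Seifert fibration of $N$ would descend to a reducing sphere, an essential torus, or a Seifert fibration of the orbifold $\OO$, contradicting that a hyperbolic orbifold is irreducible, atoroidal and not Seifert fibered; here one also uses that cone angles $\le \pi/2$ admit no essential spherical or Euclidean $2$--suborbifolds, so the orbifold theorem applies. Since $X_\OO$ and $\Sigma_\OO$ are orientable, so is $N$, and $\OO$ is recovered from $N$ by orbifold Dehn filling, each drilled cusp being filled along its meridian with cone angle $2\pi/n_i$ for integers $n_i \ge n$.

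Next I would bound $\vol(N) - \vol(\OO)$ from above. By Kojima's rigidity for cone angles at most $\pi$, there is a unique family of hyperbolic cone structures interpolating between the complete structure on $N$ (all cone angles $0$) and $\OO$ (angles $2\pi/n_i$), and along this family the Schl\"afli formula gives
\[ \vol(N) - \vol(\OO) = \tfrac12 \sum_i \int_0^{2\pi/n_i} \ell_i(t)\, dt , \]
where $\ell_i(t)$ is the length of the $i$--th core geodesic at cone angle $t$. The Hodgson--Kerckhoff deformation estimates bound every $\ell_i(t)$ by a universal constant throughout the range $t \le \pi/2$, so $\vol(N) \le \vol(\OO) + \epsilon(n)$ for an explicit $\epsilon(n) \to 0$. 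Since $\vol(\PP_n) < 2V_3$ and $\vol(\PP_n) \to 2V_3$, where $V_3$ is the volume of the regular ideal tetrahedron, for all $n$ beyond an explicit bound this yields $\vol(N) < 2.57$, which lies below the second-smallest one-cusped volume and below the smallest two-cusped volume $\approx 3.66$ \cite{agol:2cusped}. By Cao--Meyerhoff \cite{cao-meyerhoff}, $N$ then has a single cusp and $\vol(N) = 2V_3$, so $N$ is the figure--$8$ knot complement $m004$ or its sister $m003$.

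It now remains to rule out every cone-$2\pi/n$ orbifold filling of $m003$ or $m004$ other than the meridian filling of $m004$, which is $\PP_n$ itself. Writing such a filling as $\vol = 2V_3 - \tfrac12 \int_0^{2\pi/n} \ell_\sigma(t)\, dt$ in terms of its slope $\sigma$, the desired inequality $\vol > \vol(\PP_n)$ becomes $\int_0^{2\pi/n} \ell_\sigma(t)\, dt < \int_0^{2\pi/n} \ell_\mu(t)\, dt$, i.e.\ the competing core geodesic must be shorter than that of the figure--$8$. Since the core length at a fixed cone angle decreases as the filling slope lengthens, this holds for all but finitely many slopes; the remaining short slopes of $m003$ and $m004$ are dispatched by comparing core lengths directly, using the order--$8$ symmetry group of $m004$ and the $m003$--$m004$ symmetry to cut down the cases. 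This pins $\sigma$ to the figure--$8$ meridian and $\OO$ to $\PP_n$.

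I expect the real work to sit in two places. First, the small cases $n \in \{4,5,6\}$, where the cone-deformation estimate $\epsilon(n)$ is not visibly small enough to eliminate every one-cusped drilling: here one would instead use a tube-volume bound around the shortest singular geodesic of $\OO$ to confine $N$ to a short explicit list of low-volume cusped manifolds, and then verify the cone-$2\pi/n$ fillings of each by a rigorous numerical computation. Second, the short-slope comparison above rests on knowing that, among all competing pairs $(N, \sigma)$, the figure--$8$ meridian yields the \emph{longest} core geodesic at cone angle $2\pi/n$; establishing this uniformly in $n$, rather than by brute force for each $n$, is the delicate point. The large--$n$ regime, by contrast, is essentially soft once the census inputs are in hand.
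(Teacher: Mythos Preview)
Your overall strategy --- drill the singular locus, bound the drilled volume, invoke a census of small cusped manifolds, then compare fillings --- is exactly the paper's. But two steps are not justified as written.

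First, your bound $\vol(N) \le \vol(\OO) + \epsilon(n)$ with $\epsilon(n) \to 0$. You assert that Hodgson--Kerckhoff bounds every core length $\ell_i(t)$ by a universal constant for cone angles $t \le \pi/2$, but no such universal bound exists: a cone manifold with cone angle $\pi/2$ can have an arbitrarily long singular geodesic. What makes the argument work is a lower bound on the \emph{tube radius} about $\Sigma_\OO$, and this is where the $n$--dependence really enters. The paper obtains it from the axial-distance results of Gehring--Maclachlan--Martin--Reid (\refprop{tube}) and then feeds the tube radius into the Agol--Dunfield drilling theorem to get a multiplicative bound $\vol(\OO)/\vol(N) \ge c(n)$ with $c(n) \to 1$ (\refprop{drillbound}). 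You invoke tube bounds only in your small-$n$ discussion, but they are needed at every $n$; without them your Schl\"afli integral is uncontrolled.

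Second, the slope comparison. The paper does not argue that core length is monotone in slope length at a fixed cone angle --- that statement is only asymptotic, and making it effective at finite $n$ is precisely the problem. Instead it observes that for $n \ge 15$ the inverse norms satisfy $A(\tilde s) \ge 4 A(s)$ for every competitor $s$ (\reflem{slopes}), and then proves via explicit inequalities among the Hodgson--Kerckhoff functions $f, \tilde f, l, u$ that the HK \emph{lower} bound on $\Delta V(\tilde s)$ exceeds the HK \emph{upper} bound on $\Delta V(s)$ (\refprop{vol-change}). You correctly flag this as the delicate point; the paper's resolution is a concrete calculus exercise with those four functions rather than a monotonicity principle. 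For $4 \le n \le 14$ the paper, like you, reduces to a finite rigorous computer check, using \refthm{fkp} to cut the list of slopes to a finite one.
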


An exact formula for the volume of $\PP_n$ was computed by Vesnin and Mednykh \cite[Lemma 1]{vesnin-mednykh}.
See \reftab{MomBound} below for a table of approximate values. As $n \to \infty$,  $\vol(\PP_n)$ converges to $2V_3 = 2.02988\dots$, the volume of the figure--$8$ knot complement. See \refsec{calculus} for quantitative estimates on the rate of convergence.

The hypothesis that $n\geq 4$ is crucial for \refthm{main}, since $\PP_2$ is a spherical orbifold and $\PP_3$ is Euclidean  \cite[Example 2.33]{chk:orbifold}. Here is the state of knowledge about volume minimizers in $L_n$ for these small values of $n$.

By the work of Gehring, Marshall, and Martin \cite{gehring-martin:minimal-orbifold, marshall-martin:minimal-orbifold2},  the unique volume minimizer of $L_2$ is the minimal--volume orbifold $\OO_\min$, of volume $\vol(\OO_\min) = 0.03905 \dots$.

The volume minimizer in $L_3$ is currently unknown.  In \refcor{3torsion}, we build on our previous work \cite[Theorem 1.6]{atkinson-futer}
to show that $\vol(\OO) \geq 0.2371$ for every $\OO \in L_3$.
We conjecture that the unique volume minimizer in $L_3$ is  the orbifold
$\OK$ depicted in \reffig{examples}, right. Since $\vol(\OK) =
\vol(M_W)/3 = 
0.3142 \dots$,   there is currently a $25\%$ gap between the conjectured and proven volume bounds.

\subsection{Symmetry groups}
One major reason to search for the smallest volume orbifold with a particular property is that any finite group $G$ acting on a manifold $M$ by isometries has an orbifold as a quotient. Thus universal lower bounds on the volume of $M/G$ lead to upper bounds on the size of $G$. This connection is already present in dimension $2$, where Hurwitz's famous $84(g-1)$ theorem about the maximal symmetry group of a genus $g$ hyperbolic surface follows as a corollary of identifying $\pi/21$ as the minimal area of a hyperbolic $2$--orbifold \cite{hurwitz:84g, macbeath:hurwitz-theorem}. In a similar vein,  Gehring, Marshall, and Martin's identification of the smallest--volume orientable $3$--orbifold 
$\Omin$ implies that a hyperbolic $3$--manifold $M$ with symmetry group $G$ must have volume at least $ \vol (\Omin) |G| > 0.039 |G|$.

As a consequence of \refthm{main} and some intermediate results, we obtain a refinement of their result that depends on the orders of elements in $G$. Note that when $|G|$ is odd, the constant multiplying $|G|$ is much larger than $0.039$.

\begin{theorem}\label{Thm:volume-special-case}
Let $M$ be an orientable hyperbolic $3$--manifold of finite volume, and let $G$ be a group of orientation-preserving isometries of $M$.
 Let $p$ be the smallest prime number dividing $|G|$, or else $1$ if $G$ is trivial.
 Then
\begin{equation*}
\vol(M) \: \geq \: |G| \cdot w_p,
\quad
\mbox{where}
\quad
w_p = \begin{cases}
\vol(\OO_\min) \geq 0.03905 & p=2 \\
0.2371 & p=3 \\
\vol(\PP_5) \geq 0.9372 & p=5 \\
\vol(M_W) \geq 0.9427 & \mbox{otherwise}.
\end{cases}
\end{equation*}
\end{theorem}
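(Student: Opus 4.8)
The plan is to reduce the statement to a case analysis on $p$, using the orbifold quotient $\OO = M/G$ and the prime-order element theorems already available. Since $G$ acts by orientation-preserving isometries on the orientable manifold $M$, the quotient $\OO = M/G$ is an orientable hyperbolic $3$--orbifold with $\vol(\OO) = \vol(M)/|G|$, so it suffices to show $\vol(\OO) \geq w_p$ in each case. If $G$ is trivial the claim is vacuous ($p = 1$, $w_1$ unneeded), so assume $|G| > 1$ and let $p$ be the smallest prime dividing $|G|$. By Cauchy's theorem $G$ contains an element of order $p$, so $\OO$ has a point of order $p$ in its singular locus; in particular $\Sigma_\OO \neq \emptyset$ and $\OO$ is a nontrivial orbifold.

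\textbf{Case $p = 2$.} Here $\OO$ is an arbitrary orientable hyperbolic $3$--orbifold, so $\OO \in L_2$, and the Gehring--Marshall--Martin result gives $\vol(\OO) \geq \vol(\OO_\min) \geq 0.03905$.

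\textbf{Case $p = 3$.} Every element of $G$ has order divisible only by primes $\geq 3$, so every torsion order of $\OO$ is at least $3$; thus $\OO \in L_3$. By \refcor{3torsion} (our strengthening of \cite[Theorem 1.6]{atkinson-futer}), $\vol(\OO) \geq 0.2371$.

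\textbf{Case $p = 5$.} Now every torsion order of $\OO$ is at least $5$, so $\OO \in L_5$, and \refthm{main} identifies the unique minimizer as $\PP_5$; hence $\vol(\OO) \geq \vol(\PP_5) \geq 0.9372$, the numerical value coming from the Vesnin--Mednykh formula.

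\textbf{Case $p \geq 7$ (the "otherwise" case).} Here every torsion order of $\OO$ is $\geq 7 > 4$, so $\OO \in L_n$ for $n \geq 7$ and \refthm{main} gives $\vol(\OO) \geq \vol(\PP_7)$. I then need $\vol(\PP_7) \geq \vol(M_W) = 0.9427\dots$; this follows because $\vol(\PP_n)$ is increasing in $n$ (the monotonicity established in \refsec{calculus}) and a direct evaluation of the Vesnin--Mednykh formula at $n = 7$ already exceeds $0.9427$, with the limit $2V_3 = 2.0299\dots$ as $n \to \infty$ giving ample room. Combining the four cases proves the theorem.

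The only genuine input beyond bookkeeping is \refthm{main} together with the \refcor{3torsion} bound; the main obstacle in the overall project is of course establishing \refthm{main}, but for \emph{this} corollary the one point requiring care is the $p \geq 7$ case, where one must confirm that the smallest relevant orbifold $\PP_7$ already clears the Weeks-manifold volume so that the bound $w_p = \vol(M_W)$ is valid — a short numerical check via the explicit volume formula for $\PP_n$.
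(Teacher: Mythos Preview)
There is a genuine gap: you assert that Cauchy's theorem gives an element of order $p$ in $G$, ``so $\OO$ has a point of order $p$ in its singular locus.'' This inference is false. An element of order $p$ in $G$ need not fix any point of $M$; the action of $G$ can be free, in which case $\OO = M/G$ is a hyperbolic \emph{manifold} with $\Sigma_\OO = \emptyset$. Concretely, any regular degree--$p$ cover $M \to N$ of hyperbolic $3$--manifolds gives a free $\ZZ/p$ action on $M$. In that situation $\OO \notin L_p$ (the definition of $L_p$ requires non-empty singular locus), so you cannot invoke \refthm{main} or \refcor{3torsion}. The same issue makes the $p=1$ case non-vacuous: there the claim is exactly $\vol(M) \geq \vol(M_W)$, which is the Gabai--Meyerhoff--Milley theorem and cannot be dismissed as bookkeeping.

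The paper's proof (of the slightly more general \refthm{closed-mfld-volume}) splits into two cases for exactly this reason. If $G$ acts freely, then $\OO$ is a manifold and the Gabai--Meyerhoff--Milley result gives $\vol(\OO) \geq \vol(M_W) \geq w_p$ for every $p$. If $G$ acts with fixed points, then $\Sigma_\OO \neq \emptyset$; each edge stabilizer is a nontrivial subgroup of $G$, hence has order at least $p$, so $\OO \in L_p$ and your case analysis applies. Your treatment of the non-free case is essentially correct (including the check that $\vol(\PP_7) > \vol(M_W)$, which is visible in \reftab{MomBound}), but you must add the free-action case and the appeal to the Weeks-manifold minimality to close the argument.
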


See \refthm{closed-mfld-volume} in \refsec{symmetry} for a slightly sharper version of \refthm{volume-special-case}, which takes into account the way in which $G$ acts. See also \refthm{cusped-mfld-volume} for stronger volume estimates under the additional hypothesis that $M$ has cusps. That result strengthens and generalizes the work of Adams  \cite{adams:limit-volume-orbifolds} and Futer, Kalfagianni, and Purcell \cite{fkp:conway} on cusped manifolds with symmetry.

\subsection{Organization}
We begin the proof of \refthm{main} by restricting the topology of $\OO$. In \refsec{link}, we show that the singular locus of any volume minimizer $\OO \in L_n$ must be a link, with all components labeled $n$. In \refsec{moms}, we show that this link has a large embedded tube. When this tube is drilled out, the work of Agol and Dunfield \cite{ast} combined with that of Gabai, Meyerhoff, and Milley \cite{gmm:smallest-cusped} implies  that $\OO \setminus \Sigma_\OO$ is one of only ten cusped hyperbolic manifolds. In other words, $\OO$ must be obtained by \emph{Dehn filling} one of these ten manifolds. See \refthm{few-parents}.

In \refsec{computer}, we prove \refthm{main} for $4 \leq n \leq 14$. We use a theorem of Futer, Kalfagianni, and Purcell \cite{fkp:volume} to limit the possibilities for $\OO$ to finitely many Dehn fillings of the manifolds from \refthm{few-parents}. Then, we rigorously check the finitely many fillings by computer, following Milley \cite{milley}, to finish the proof.

In \refsec{calculus}, we finish the proof of \refthm{main} for $n \geq 15$. For these values of $n$, \refthm{few-parents} tells us that $\OO$ is a Dehn filling of one of just \emph{two} surgery parents, both of which have volume $2V_3 \approx 2.03$. Thus it suffices to estimate the change in volume during Dehn filling. We do this using the work of Hodgson and Kerckhoff on cone-manifolds \cite{hodgson-kerckhoff}.

Finally, in \refsec{symmetry}, we apply the above results to relate the volume of a hyperbolic manifold to the size of its symmetry group.

\section{Link orbifolds}\label{Sec:link}

The goal of this section is to relate small-volume hyperbolic orbifolds to \emph{link orbifolds}, which were the main object of study in our previous paper \cite{atkinson-futer}.

\begin{definition}\label{Def:link-orbifold}
A $3$--orbifold $\OO$ is called a \emph{link orbifold} if its singular locus $\Sigma_\OO$
is a closed $1$--manifold (i.e., a link) in $X_{\OO}$.
\end{definition}

We will show, in \refprop{reduction}, that for any $n \geq 3$, a volume minimizer $\OO \in L_n$ must be a link orbifold with $n$--torsion only. Combined with \cite[Theorem 1.6]{atkinson-futer}, this will imply lower bounds on the volume of any orbifold $\OO \in L_3$. In the next section, we use these results to further restrict the topology of any volume minimizer $\OO \in L_n$.

\begin{lemma}\label{Lem:LargeNLink}
Let $\OO$ be a hyperbolic $3$--orbifold with no $2$--torsion. Then $\Sigma_\OO$ has no vertices. Furthermore, either $\OO$ is a link orbifold, or it is non-compact, with a $S^2(3,3,3)$ cusp cross-section.
\end{lemma}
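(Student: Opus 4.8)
The plan is to analyze the local structure of the singular locus $\Sigma_\OO$ vertex by vertex, and then the structure near a cusp, using the fact that vertices and cusp cross-sections of a hyperbolic $3$--orbifold are governed by spherical and Euclidean $2$--orbifolds respectively. Recall that a regular neighborhood of a trivalent vertex $v \in \Sigma_\OO$ is the quotient of $B^3$ by an orientation-preserving spherical triangle group, so the three edges meeting at $v$ carry torsion orders $(p,q,r)$ with $\frac1p + \frac1q + \frac1r > 1$; the complete list is $(2,2,r)$ for $r \geq 2$, $(2,3,3)$, $(2,3,4)$, and $(2,3,5)$. Every one of these triples contains a $2$. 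Since $\OO$ has no $2$--torsion, no such vertex can occur, so $\Sigma_\OO$ has no vertices in the interior of $X_\OO$. Because $\OO$ is orientable and hyperbolic, $X_\OO$ is a manifold and $\Sigma_\OO$ is a graph all of whose interior vertices are trivalent and whose only other vertices lie on $\partial X_\OO$ with valence $1$; ruling out trivalent vertices forces $\Sigma_\OO$ to be a disjoint union of circles together with (possibly) arcs running to $\partial X_\OO$.

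Next I would handle the boundary/cusp behavior. A finite-volume hyperbolic orbifold $\OO$ has only torus or pillowcase-type cusps in the manifold case, but in general each cusp cross-section is a closed orientable Euclidean $2$--orbifold: one of $T^2$, $S^2(2,2,2,2)$, $S^2(2,3,6)$, $S^2(2,4,4)$, or $S^2(3,3,3)$. The cone points of this Euclidean orbifold are exactly the points where edges of $\Sigma_\OO$ exit the cusp, and the cone order equals the torsion order of that edge. Since $\OO$ has no $2$--torsion, the only Euclidean cusp cross-sections that survive are $T^2$ (no singular edges through the cusp) and $S^2(3,3,3)$ (three singular edges of order $3$). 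The first case contributes nothing to $\Sigma_\OO$; the second gives a single cusp meeting three edges of order $3$. This also confirms there are no valence-$1$ vertices on any actual boundary component of finite type other than via cusps, so once we pass to the finite-volume setting the only arcs of $\Sigma_\OO$ are the three emanating from an $S^2(3,3,3)$ cusp.

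It then remains to assemble the global picture. If $\OO$ is compact, there are no cusps, $\Sigma_\OO$ has no vertices and no endpoints, so $\Sigma_\OO$ is a closed $1$--manifold and $\OO$ is a link orbifold. If $\OO$ is non-compact, each cusp cross-section is $T^2$ or $S^2(3,3,3)$; a purely $T^2$-cusped orbifold again has $\Sigma_\OO$ a closed $1$--manifold and is a link orbifold, while any $S^2(3,3,3)$ cusp forces the stated exceptional conclusion. (One should note that a single $S^2(3,3,3)$ cusp already accounts for all three edge-ends at that cusp, and a connectedness/Euler-characteristic count shows you cannot have such a cusp together with extra structure forcing a vertex; the edges emanating from it must close up among themselves or run to other $S^2(3,3,3)$ cusps, but in all these cases the orbifold is non-compact with an $S^2(3,3,3)$ cross-section, which is exactly the alternative allowed in the statement.)

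The main obstacle I anticipate is not the vertex analysis — that is a short check against the classification of spherical triangle groups — but rather being careful about the interface between $\Sigma_\OO$ and the ends of $X_\OO$: one must invoke the classification of Euclidean $2$--orbifolds for cusp cross-sections and correctly match cone orders to edge torsion, and one must make sure the ``either/or'' in the statement is genuinely exhaustive, i.e. that no hybrid configuration (e.g. a $T^2$ cusp coexisting with an interior non-circular component of $\Sigma_\OO$) can arise. This is handled by the observation that after removing interior vertices the only non-circle pieces of $\Sigma_\OO$ are arcs, and arcs must terminate at cusp cone points, which by the no-$2$-torsion hypothesis forces those cusps to be $S^2(3,3,3)$.
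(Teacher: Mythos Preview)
Your proof is correct and follows essentially the same approach as the paper: rule out interior vertices via the classification of spherical triangle groups $S^2(p,q,r)$ with $\tfrac1p+\tfrac1q+\tfrac1r>1$ (all contain a $2$), then rule out all Euclidean cusp cross-sections except $T^2$ and $S^2(3,3,3)$. The paper's argument is slightly terser (it does not enumerate the spherical or Euclidean types explicitly, and phrases the non-circle case as ``$e$ is a bi-infinite geodesic with both endpoints at a cusp''), but the content is the same.
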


\begin{proof}
Suppose, for a contradiction, $\Sigma_{\OO}$ contains a vertex $v$. Then the boundary of a regular neighborhood of $v$ is a spherical $2$--orbifold $S^2(p,q,r)$, where $
\frac{1}{p} + \frac{1}{q} + \frac{1}{r} > 1$.
This cannot occur in our setting, where all torsion orders of $\OO$ are bounded below by $3$.

Next, suppose that a singular edge $e \subset \Sigma_\OO$ is not a closed geodesic. Since $\OO$ has no vertices, $e$ must be a bi-infinite geodesic with both endpoints at a cusp. The cross-section of such a cusp is a Euclidean $2$--orbifold, either $S^2(2,2,2,2)$ or $S^2(p,q,r)$ where $\frac{1}{p} + \frac{1}{q} + \frac{1}{r} = 1$. The only possibility with no $2$--torsion is $p=q=r=3$.
\end{proof}

\begin{lemma}\label{Lem:turnover-volume}
Let $\OO$ be a hyperbolic $3$--orbifold with no $2$--torsion, and suppose $\OO$ is not a link orbifold. Then $\OO$ has $3$--torsion, and $\vol(\OO) \geq V_3 / 2 =  0.50747 \dots$, where $V_3$ is the volume of a regular ideal tetrahedron in $\HH^3$.
\end{lemma}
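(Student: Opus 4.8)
The plan is to use \reflem{LargeNLink} to pin down the shape of $\OO$, and then to bound $\vol(\OO)$ below by the volume of a maximal cusp, calibrating the constant with a horoball–packing argument. Since $\OO$ has no $2$--torsion and is not a link orbifold, \reflem{LargeNLink} forces $\OO$ to be non-compact with a cusp $C$ whose cross-section is the Euclidean turnover $S^2(3,3,3)$. The three singular edges entering $C$ have cone angle $2\pi/3$, hence torsion order $3$, so $\OO$ has $3$--torsion; this is the first assertion. For the volume bound, pass to the upper half-space model with $C$ based at $\infty$, so that the cusp subgroup $\Gamma_\infty<\pi_1(\OO)$ is the orientation-preserving Euclidean $(3,3,3)$--triangle group acting on $\CC$. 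It contains the translation lattice $\Lambda$ with index $3$; consequently every horospherical cross-section $T$ of $C$ has $\area(T)=\tfrac13\,\mathrm{covol}(\Lambda)$, and the corresponding horoball neighborhood has volume $\tfrac12\area(T)$.

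Now expand $C$ to a maximal embedded horoball neighborhood; after rescaling, its lift is $\wt C=\{t\ge1\}$, and maximality supplies a horoball $B$ of Euclidean diameter $1$, based at a parabolic point $p\in\CC$, tangent to $\wt C$ and lying in the orbit $\pi_1(\OO)\cdot\wt C$. Embeddedness makes $\{gB:g\in\Gamma_\infty\}$ a family of pairwise disjoint diameter-$1$ horoballs, so $\Gamma_\infty\cdot p$ is a $1$--separated subset of $\CC$ and hence has at most $\tfrac{2}{\sqrt3}$ points per unit area, by the optimal packing of radius-$\tfrac12$ disks. Provided $p$ is not a cone point of the turnover $\bdy C$ — equivalently, not fixed by an order-$3$ rotation in $\Gamma_\infty$ — the lattice $\Lambda$ acts freely on $\Gamma_\infty\cdot p$, which then consists of exactly $3$ cosets of $\Lambda$ and so has density $3/\mathrm{covol}(\Lambda)$. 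Comparing the two densities gives $\mathrm{covol}(\Lambda)\ge\tfrac{3\sqrt3}{2}$, so $\area(\bdy C)\ge\tfrac{\sqrt3}{2}$ and $\vol(C)\ge\tfrac{\sqrt3}{4}$. Finally, the Böröczky horoball–packing inequality, in the cusp form due to Meyerhoff (applied to a torsion-free finite cover of $\OO$), says an embedded cusp occupies at most the fraction $d_3=\tfrac{\sqrt3}{2V_3}\approx 0.8533$ of the total volume, so $\vol(\OO)\ge\vol(C)/d_3\ge\tfrac{\sqrt3}{4}\cdot\tfrac{2V_3}{\sqrt3}=\tfrac{V_3}{2}$.

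The main obstacle is justifying the hypothesis that the self-tangency point $p$ is not a cone point: if it were, then $\Gamma_\infty\cdot p$ would be a single $\Lambda$--orbit and the estimate would weaken by a factor of $3$, which is insufficient. A cone point of $\bdy C$ is an endpoint of a singular edge of order $3$, which by \reflem{LargeNLink} runs to another cusp; since $\OO$ has no $2$--torsion, that cusp is again an $S^2(3,3,3)$ turnover. I would eliminate the bad case by expanding a maximal neighborhood of all cusps at once and showing that a tangency occurring along such a singular edge reduces, after re-centering at the neighboring turnover cusp, to the generic situation already handled; alternatively one can invoke Gehring--Martin style tube estimates around order-$3$ elliptic axes to show this tangency is never the first one encountered. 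A subsidiary point is to record the Böröczky/Meyerhoff bound with its exact constant $\tfrac{\sqrt3}{2V_3}$ (the horoball density realized by the four corners of a regular ideal tetrahedron), which is precisely what makes the estimate attain the sharp value $V_3/2$.
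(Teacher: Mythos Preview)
Your approach is genuinely different from the paper's. The paper cites Adams' enumeration of the three orbifolds with an $S^2(3,3,3)$ cusp and volume below $V_3/2$, observes (via Adams' Lemma 2.1) that each of them has an order--$2$ elliptic interchanging the two horoballs at its unique self-tangency, and concludes that none of them can arise when $\OO$ has no $2$--torsion. Your argument instead attempts a direct cusp--area bound via horoball packing and B\"or\"oczky. The generic case you handle --- when the base point $p$ of the tangent horoball is not a cone point --- is computed correctly and does yield $\vol(\OO)\ge V_3/2$.

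The gap is the cone--point case, and it is not a technicality: it is exactly where the three Adams exceptions live. Indeed, your own generic estimate shows this: if in any of Adams' orbifolds the tangency were at a non--cone point, your bound would force its volume to be at least $V_3/2$, contradicting Adams. So in each exception the tangency \emph{is} at a cone point, and the cone--point case is precisely where the hypothesis ``no $2$--torsion'' must enter --- yet your volume argument never invokes it. Your proposed fixes do not supply this. The ``re-centering at the neighboring cusp'' idea fails because the tangent horoball $B$ lies in the $\pi_1(\OO)$--orbit of $\wt C$, so the other end of the singular edge is the \emph{same} cusp; re-centering reproduces the same tangency at a cone point and gives no new information. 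The ``expand all cusps at once'' variant has the same problem. The appeal to Gehring--Martin tube estimates is too vague to evaluate; there is no evident mechanism by which an order--$3$ tube radius alone rules out a first tangency along the singular edge. To close the gap you must bring in the absence of $2$--torsion explicitly --- for instance by showing (as in Adams' Lemma~2.1) that a tangency of this type forces an order--$2$ elliptic swapping the two horoballs, which is exactly the lever the paper uses.
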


\begin{proof}
By \reflem{LargeNLink}, $\OO$ is non-compact and has a  $S^2(3,3,3)$ cusp cross-section. Orbifolds of this type were analyzed by Adams~\cite[Section 4]{adams:small-volume-orbifolds}. He showed that there are exactly three hyperbolic $3$--orbifolds with a $S^2(3,3,3)$ cusp cross-section and volume less than $V_3 / 2$. We call these exceptional orbifolds $\OO_1, \OO_2, \OO_3$.

For each $\OO_i$, let $C_i$ be a horospherical neighborhood of a $S^2(3,3,3)$ cusp, which is \emph{maximal} in the sense that no larger neighborhood is embedded. Lifting $C_i$ to the universal cover $\HH^3$ produces an arrangement of horoballs tangent at their boundaries, called a \emph{horoball packing}. For each $\OO_i$, Adams gives a complete description of the horoball packing of $\HH^3$; see  \cite[Figure 3]{adams:small-volume-orbifolds}. In particular, these three orbifolds share the feature that a fundamental domain for $\bdy C_i \cong S^2(3,3,3)$ has a unique point of self-tangency. As Adams notes \cite[Lemma 2.1]{adams:small-volume-orbifolds}, this unique point of self-tangency of $\bdy C_i$ lifts to a point of tangency between two horoballs in $\HH^3$, with an order--$2$ elliptic element of $\pi_1(\OO_i)$ interchanging the two horoballs.

This cannot occur under the hypotheses of this lemma, as our orbifold $\OO$ has no $2$--torsion. Therefore, $\vol(\OO) \geq V_3 / 2$.
\end{proof}

\begin{lemma}\label{Lem:reduction}
Fix $n \geq 3$, and let $\OO \in L_n$ be a hyperbolic link orbifold. If $n = 3$, suppose in addition that $\vol(\OO) < 0.4408$.
Let $\OO'_n$ be a link orbifold with the same base space and singular locus as $\OO$, but with all torsion orders changed to $n$. 

Then $\OO'_n$ is hyperbolic and $\vol(\OO'_n) \leq \vol(\OO)$, with equality if and only if $\OO'_n = \OO$.
\end{lemma}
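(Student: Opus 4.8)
The plan is to prove \reflem{reduction} using orbifold hyperbolic Dehn surgery, viewing $\OO$ and $\OO'_n$ as fillings of a common cusped parent. Let $M = X_\OO \setminus \Sigma_\OO$ be the complement of the singular locus in the base space. Since $\OO$ is a hyperbolic link orbifold in $L_n$, all torsion orders of $\OO$ are $\geq n \geq 3$, and orbifold hyperbolic Dehn surgery (in the form used by Thurston, with the quantitative control of Hodgson--Kerckhoff \cite{hodgson-kerckhoff}) implies that $M$ itself is hyperbolic: one recovers $M$ from $\OO$ by drilling out the singular locus, and the resulting fillings along meridians with cone angles $2\pi/p_i \leq 2\pi/n \leq 2\pi/3$ all lie in the range where the complete structure on $M$ deforms to the orbifold structure on $\OO$. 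The orbifold $\OO'_n$ is obtained from the very same $M$ by filling each cusp with cone angle $2\pi/n$; since $2\pi/n \leq 2\pi/3 < 2\pi$, the Hodgson--Kerckhoff deformation theory applies and $\OO'_n$ is also hyperbolic.

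The heart of the argument is the volume comparison, and this is where I would use the monotonicity of volume along cone-manifold deformations. Fix one cusp of $M$ and the corresponding singular component, with torsion order $p \geq n$ in $\OO$. Consider the one-parameter family of hyperbolic cone-manifolds obtained by filling the meridian of that cusp with cone angle $\theta$ ranging over $[2\pi/p, 2\pi/n] \subseteq (0, 2\pi/3]$, keeping all other filling slopes fixed. By the Schl\"afli-type formula for cone-manifolds (Hodgson--Kerckhoff), $\frac{d}{d\theta}\vol = -\tfrac{1}{2}\,\length(\Sigma_\theta)$, where $\Sigma_\theta$ is the length of the core geodesic; since this is strictly negative whenever the core geodesic is present, increasing the cone angle strictly decreases the volume. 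Because $p \geq n$ means $2\pi/p \leq 2\pi/n$, decreasing the torsion order from $p$ to $n$ means increasing the cone angle, hence $\vol$ strictly decreases — unless $p = n$ already, in which case nothing changes. Repeating this one cusp at a time over all singular components gives $\vol(\OO'_n) \leq \vol(\OO)$, with equality exactly when $p_i = n$ for every $i$, i.e. when $\OO'_n = \OO$.

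The main obstacle — and the reason for the hypothesis $\vol(\OO) < 0.4408$ when $n = 3$ — is ensuring that the entire deformation path stays in the regime where Hodgson--Kerckhoff rigidity and the volume derivative formula are valid, i.e. that the hyperbolic cone-manifold structure persists for all cone angles in $[2\pi/p_i, 2\pi/n]$ simultaneously and that no singular geodesic degenerates (which would make the path leave the space of hyperbolic cone-manifolds). For $n \geq 4$ the cone angles never exceed $\pi/2$, comfortably inside the Hodgson--Kerckhoff global rigidity range, so the path exists and volume decreases monotonically with no extra hypothesis. For $n = 3$ the endpoint cone angle is $2\pi/3$, still below the $2\pi$ threshold, but one must rule out intermediate degeneration; the volume bound $\vol(\OO) < 0.4408$ is precisely what lets us invoke the relevant universal tube-radius and non-degeneration estimates (as in \cite{hodgson-kerckhoff}) to guarantee the family of cone-manifolds is defined on the whole interval. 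I would carry out the argument by (i) setting up $M$ and checking it is hyperbolic, (ii) invoking Hodgson--Kerckhoff to get the smooth family of cone-manifold structures over the relevant polydisc of cone angles, (iii) applying the Schl\"afli formula coordinate-by-coordinate to deduce strict monotonicity and the equality case, and (iv) handling the $n = 3$ degeneration issue separately using the volume hypothesis.
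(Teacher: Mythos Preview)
Your strategy matches the paper's: the paper simply cites \cite[Proposition 2.8 and Lemma 5.1]{atkinson-futer} for the hyperbolicity of $\OO'_n$ and then invokes Schl\"afli's formula \cite[Theorem 3.20]{chk:orbifold} for the volume inequality and its equality case. Your outline---run a cone-manifold deformation changing cone angles from $2\pi/p_i$ to $2\pi/n$ and apply the Schl\"afli derivative formula---is exactly the mechanism behind that citation, so the approaches coincide.

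Two points where your write-up is loose. First, your argument that $M = X_\OO \setminus \Sigma_\OO$ is hyperbolic is phrased circularly: you say the fillings of $M$ lie in the range where the complete structure on $M$ deforms, but that presupposes $M$ carries a complete structure. The clean statement is that one can \emph{decrease} cone angles from the orbifold $\OO$ down to $0$ (the drilling direction of Hodgson--Kerckhoff, or Kojima's drilling theorem), and this yields the complete structure on $M$. In fact you do not strictly need $M$ at all: the deformation can be run directly from $\OO$ to $\OO'_n$.

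Second, and more substantively, the assertion that ``for $n\geq 4$ the cone angles never exceed $\pi/2$, comfortably inside the Hodgson--Kerckhoff global rigidity range, so the path exists with no extra hypothesis'' is not self-justifying. Hodgson--Kerckhoff rigidity and the deformation theory require an embedded-tube-radius lower bound along the path, not merely a cone-angle bound; a cone angle of at most $\pi/2$ does not by itself rule out degeneration as you \emph{increase} cone angles (witness $\PP_3$, which is Euclidean). This is precisely what the paper flags as ``the hard part of the proof,'' handled in \cite{atkinson-futer} using tube-radius estimates of the Gehring--Martin type together with the Hodgson--Kerckhoff non-degeneration machinery. Your step~(iv) for $n=3$ correctly identifies that the volume hypothesis $\vol(\OO)<0.4408$ feeds into exactly this non-degeneration argument, but the same kind of work (with different inputs) is also needed for $n\geq 4$.
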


\begin{proof}
This is a restatement of \cite[Proposition 2.8 and Lemma 5.1]{atkinson-futer}. The hard part of the proof is showing that $\OO'_n$ is hyperbolic; once this is established, the volume inequality follows from Schl\"afli's formula \cite[Theorem 3.20]{chk:orbifold}.
\end{proof}

Combining the last two results gives us a lot of control over volume minimizers in $L_n$.

\begin{proposition}\label{Prop:reduction}
Let $n \geq 3$, and let $\OO \in L_n$ be a hyperbolic orbifold that minimizes volume among all orbifolds in $L_n$. Then
\begin{enumerate}
\item\label{Itm:link} $\OO$ is a link orbifold.
\item\label{Itm:same} Every component of $\Sigma_{\OO}$ has torsion order exactly $n$.
\end{enumerate}
\end{proposition}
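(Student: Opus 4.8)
The plan is to derive both conclusions formally from the three preceding lemmas, the only genuine case split being $n = 3$ versus $n \geq 4$. Since every orbifold in $L_n$ has all torsion orders at least $n \geq 3$, our minimizer $\OO$ has no $2$--torsion, so both \reflem{LargeNLink} and \reflem{turnover-volume} apply to it.

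For \refitm{link} I would first dispose of the case $n \geq 4$ without using minimality at all: here $\OO$ has neither $2$--torsion nor $3$--torsion, so by \reflem{LargeNLink} it is either a link orbifold or a non-compact orbifold with an $S^2(3,3,3)$ cusp cross-section, and the latter is impossible since it would force $3$--torsion in $\OO$. For $n = 3$ minimality is needed: by the contrapositive of \reflem{turnover-volume}, it suffices to exhibit a single element of $L_3$ of volume less than $V_3/2 = 0.50747\dots$. The orbifold $\OK$ with singular locus the $5_2$ knot labeled $3$ does this, because $\vol(\OK) = \vol(M_W)/3 = 0.3142\dots$; minimality then gives $\vol(\OO) \leq \vol(\OK) < V_3/2$, so $\OO$ is a link orbifold.

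For \refitm{same} I would apply \reflem{reduction} to the hyperbolic link orbifold $\OO \in L_n$, producing the orbifold $\OO'_n$ obtained from $\OO$ by resetting every torsion order to $n$. When $n = 3$ the lemma requires $\vol(\OO) < 0.4408$, which again holds since $\vol(\OO) \leq \vol(\OK) = 0.3142\dots$. The lemma then tells us $\OO'_n$ is hyperbolic with $\vol(\OO'_n) \leq \vol(\OO)$, and $\OO'_n$ plainly lies in $L_n$, having the same non-empty singular link as $\OO$ and all torsion orders equal to $n$. Minimality of $\OO$ forces $\vol(\OO) \leq \vol(\OO'_n)$, hence $\vol(\OO'_n) = \vol(\OO)$, and the equality clause of \reflem{reduction} yields $\OO'_n = \OO$. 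Since $\OO'_n$ and $\OO$ share base space and singular locus, this means every component of $\Sigma_{\OO}$ already has torsion order exactly $n$.

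The argument is almost entirely bookkeeping around the three lemmas; the one place needing care is the $n = 3$ case, where both steps rely on a concrete small-volume example in $L_3$ whose volume beats the two thresholds $V_3/2$ and $0.4408$. Thus the main point to verify is that $\OK \in L_3$ with $\vol(\OK) = \vol(M_W)/3$, i.e.\ that the $5_2$ knot with cone angle $2\pi/3$ is genuinely hyperbolic --- which follows from $\OK$ being covered by the Weeks manifold.
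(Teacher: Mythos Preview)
Your proof is correct and follows essentially the same route as the paper's own argument: both conclusions are reduced to the three preceding lemmas, with the orbifold $\OK$ supplying the volume thresholds needed in the $n=3$ case. You spell out the equality step in \refitm{same} (that $\OO'_n \in L_n$ and minimality forces $\OO'_n = \OO$) more explicitly than the paper does, but the logic is identical.
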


\begin{proof}
If $n \geq 4$, conclusion \eqref{Itm:link} is a restatement of \reflem{LargeNLink}. If $n=3$, and $\OO \in L_3$ is not a link orbifold, \reflem{turnover-volume} says that $\vol(\OO) > 0.5074$. This volume is not minimal in $L_3$, because there is an orbifold $\OO_K \in L_3$, as in \reffig{examples} (right), such that $\vol(\OO_K) \approx 0.31423$. Thus a volume minimizer  $\OO \in L_n$ must be a link orbifold, proving \eqref{Itm:link}.

Conclusion  \eqref{Itm:same} is immediate from \reflem{reduction}, because the orbifold $\OK$ of \reffig{examples} shows that a volume minimizer in $L_3$ must have volume less than $0.44$.
\end{proof}

\begin{corollary}\label{Cor:3torsion}
Let $\OO \in L_3$ be a hyperbolic orbifold. Then $\vol(\OO) \geq 0.2371$.
\end{corollary}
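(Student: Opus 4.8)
The plan is to derive \refcor{3torsion} as a consequence of \refprop{reduction} together with the volume estimates for link orbifolds established in our earlier work \cite[Theorem 1.6]{atkinson-futer}. Let $\OO \in L_3$ be a hyperbolic orbifold. There are two cases depending on the volume of $\OO$. If $\vol(\OO) \geq 0.4408$, we are done immediately, since $0.4408 > 0.2371$. So assume $\vol(\OO) < 0.4408$.

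In this remaining case, I would first invoke \reflem{turnover-volume}: since $\OO$ has no $2$--torsion (membership in $L_3$ forces all torsion orders to be at least $3$), if $\OO$ were not a link orbifold, then $\vol(\OO) \geq V_3/2 = 0.5074\ldots > 0.4408$, a contradiction. Hence $\OO$ is a link orbifold. Now apply \reflem{reduction} with $n = 3$: because $\vol(\OO) < 0.4408$, the hypotheses of that lemma are satisfied, so the associated orbifold $\OO'_3$ — same base space and singular locus as $\OO$, but with every torsion order reset to $3$ — is hyperbolic, and $\vol(\OO'_3) \leq \vol(\OO)$. Therefore it suffices to bound $\vol(\OO'_3)$ from below, and $\OO'_3$ is a hyperbolic link orbifold all of whose torsion orders equal $3$.

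At this point I would cite the main volume bound for link orbifolds from \cite[Theorem 1.6]{atkinson-futer}, which gives a lower bound on the volume of a hyperbolic link orbifold in terms of its (common) torsion order; specialized to torsion order $3$, this yields $\vol(\OO'_3) \geq 0.2371$. Chaining the inequalities gives $\vol(\OO) \geq \vol(\OO'_3) \geq 0.2371$, completing the proof.

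The only real subtlety is bookkeeping with the threshold $0.4408$: one must check that this number is genuinely below $V_3/2$ (so that \reflem{turnover-volume} applies in the sub-threshold case) and that it is exactly the hypothesis needed to invoke \reflem{reduction} for $n=3$; both are already built into the statements of those lemmas, so no new estimate is required. I do not anticipate any genuine obstacle here — the corollary is essentially a packaging of \refprop{reduction}'s proof structure together with the quoted link-orbifold bound, and the argument is short.
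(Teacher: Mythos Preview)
Your proof is correct and follows essentially the same route as the paper. The paper's version is terser: it observes that it suffices to bound the volume of a minimizer in $L_3$, invokes \refprop{reduction} to conclude that minimizer is a link orbifold (with all torsion orders $3$), and then cites \cite[Theorem 1.6]{atkinson-futer}; you instead work directly with $\OO$ and unpack the ingredients of \refprop{reduction} (\reflem{turnover-volume} and \reflem{reduction}) inline, but the logical content is the same.
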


\begin{proof}
By \refprop{reduction}, it suffices to consider the case where $\OO \in L_3$ is a link orbifold. Now, \cite[Theorem 1.6]{atkinson-futer} gives the estimate $\vol(\OO) \geq 0.2371$.
\end{proof}

\section{Topological reduction}\label{Sec:moms}

The goal of this section is to drastically restrict the topological and
combinatorial possibilities for the volume minimizer in $L_n$ for $n \geq 4$. Given any
orbifold $\OO$ that minimizes volume in $L_n$, we will show in
\refthm{few-parents} that the
complement of the singular locus is one of ten topological types.  This reduces the problem of finding the
volume minimizer in $L_n$ to a manageable Dehn surgery problem, which will be
analyzed in Sections~\ref{Sec:computer} and \ref{Sec:calculus}.

The proof of \refthm{few-parents} proceeds in several steps.
\begin{enumerate}[1.]

\item By \refprop{reduction}, we already know that the singular locus $\Sigma_\OO$ is a link with all components marked $n$.

\smallskip

\item For any link orbifold $\OO \in L_n$, there is a large embedded tube about $\Sigma_\OO$. Explicit estimates on the tube radius (in terms of $n$) follow from results of Gehring, Maclachlan, Martin, and Reid \cite{gmmr, gehring-martin:commutators-collars}, and are derived in \refprop{tube}.

\smallskip

\item We drill out the singular locus of $\OO$, obtaining a cusped hyperbolic
  $3$--manifold $M_\OO = X_\OO \setminus \Sigma_\OO$. Given lower bounds on
  the radius of a tube about $\Sigma_\OO$, a theorem of Agol and Dunfield   \cite{ast}
  gives lower bounds on the ratio $\vol(\OO)/\vol(M_\OO)$.
This bound, in
  terms of $n$, is derived in \refprop{drillbound}.

\smallskip

\item Gabai, Meyerhoff, and Milley have enumerated the ten cusped hyperbolic $3$--manifolds of lowest volume \cite{gmm:smallest-cusped}. Combining their enumeration with \refprop{drillbound}  and the knowledge that $\vol(\OO) \geq \vol(\PP_n)$ completes the proof of \refthm{few-parents}.
\end{enumerate}

The first step of this program requires studying embedded tubes about the singular locus.

\begin{definition}\label{Def:xn}
For $n \geq 4$, define a sequence
  \[x_n =
	\begin{cases}
	  \quotient{1}{2} + \quotient{\sqrt{3}}{2} = 1.3660...
	  &\text{if } n =4\\ 
	   1.84028
	  &\text{if } n =5\\ 
	   2.41383
	  &\text{if } n =6\\ 
	   2.65579
	  &\text{if } n =7\\ 
	  -1+ \quotient{1}{2 \sin^2(\pi/n) }
		&\text{if } n\geq 8.
	\end{cases}
	\]
\end{definition}

\begin{proposition}\label{Prop:tube}
Let $\OO$ be a finite--volume, orientable, hyperbolic  link orbifold, all of whose torsion orders are equal to $n \geq 4$. Then the singular locus $\Sigma_\OO \subset \OO$ has an embedded tubular neighborhood of radius at least 
\[
r_n = \arccosh (x_n) / 2,
\]
where $x_n$ is as in \refdef{xn}.
\end{proposition}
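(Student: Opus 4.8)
The plan is to convert the notion of an ``embedded tube'' into a lower bound on the distance between distinct elliptic axes in $\HH^3$, and then to extract that bound from the two--generator discreteness estimates of Gehring, Maclachlan, Martin, and Reid \cite{gmmr} and of Gehring and Martin \cite{gehring-martin:commutators-collars}. Write $\Gamma = \pi_1(\OO) \subset \Isom^+(\HH^3)$ and let $\AA$ be the $\Gamma$--orbit of the complete geodesics that cover $\Sigma_\OO$. Since $\OO$ is a link orbifold with all torsion orders equal to $n$, each $A \in \AA$ is the axis of an elliptic element $f_A \in \Gamma$ of order exactly $n$ which rotates by $2\pi/n$ and generates the cyclic edge stabilizer. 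The elements of $\Gamma$ preserving a single $A$ setwise form the direct product of $\langle f_A \rangle$ with a cyclic group of screw motions along $A$ --- there is no flip, because $\OO$ has no $2$--torsion --- and all of these preserve every metric neighborhood $N_r(A)$. Hence the tube of radius $r$ about $\Sigma_\OO$ is embedded exactly when $d(A,A') \geq 2r$ for every pair of \emph{distinct} $A, A' \in \AA$, and it suffices to prove $d(A,A') \geq \arccosh(x_n)$ whenever $A \neq A'$.

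The next step is to rule out degenerate configurations of such a pair of axes. Fix distinct $A, A' \in \AA$ with elliptic generators $f = f_A$ and $f' = f_{A'}$ of order $n$, and put $G = \langle f, f' \rangle \subset \Gamma$. The axes cannot intersect inside $\HH^3$: a common point would be fixed by $G$, forcing $G$ to be finite, and since $f$ and $f'$ are non--commuting rotations about distinct axes, non--cyclic; the image of that point would then be a spherical triangle vertex of $\Sigma_\OO$, contradicting \reflem{LargeNLink}. The axes also cannot share an endpoint at infinity: a closed geodesic has no lift asymptotic to a parabolic fixed point, so a common ideal point of $A$ and $A'$ would be a non--parabolic point fixed by the elementary group $G$, which then also preserves a geodesic $L$ through that point; but then $f$ and $f'$, being elliptics of $G$ with that ideal point on their axes, both have axis $L$, forcing $A = A'$ --- a contradiction. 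Therefore $A$ and $A'$ are ultraparallel, $\rho := d(A,A') > 0$, and $G$ is a discrete, non--elementary two--generator subgroup.

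The heart of the matter is then the two--generator analysis of $G$. Writing $\beta(f) = \tr^2 f - 4 = -4\sin^2(\pi/n) = \beta(f')$ and $\gamma(f,f') = \tr[f,f'] - 2$, there is a classical trace identity expressing $\gamma(f,f')$ in terms of $\beta(f)$, $\beta(f')$, and the complex distance $\delta = \rho + i\theta$ between $A$ and $A'$; since the identity involves $\sinh^2\delta$ and $|\sinh\delta|^2 = \sinh^2\rho + \sin^2\theta \leq \cosh^2\rho$, a lower bound on $|\gamma(f,f')|$ (with the $\beta$'s fixed) translates into a lower bound on $\cosh\rho$. By \cite{gmmr, gehring-martin:commutators-collars}, discreteness of $G$ forces $|\gamma(f,f')|$ to be bounded below by an explicit positive constant depending only on $n$, \emph{unless} $G$ belongs to a finite list of exceptional discrete groups generated by two order--$n$ elliptics. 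Substituting that lower bound into the complex--distance identity yields $\cosh\rho \geq x_n$ for $n \geq 8$, which is exactly the formula of \refdef{xn}. For $n \in \{4,5,6,7\}$ this uniform bound degenerates, since $4\sin^2(\pi/n)$ is too large for the threshold to stay positive, and one must instead enumerate the finitely many exceptional pairs $(f,f')$ of order--$n$ elliptics with distinct, non--asymptotic axes that discreteness allows. Each such pair either generates a group containing $2$--torsion or a spherical triangle subgroup --- both ruled out by our hypotheses together with \reflem{LargeNLink} --- or is genuinely realized, and the minimum of $\cosh\rho$ over the surviving cases equals the stated values $x_4 = \tfrac12 + \tfrac{\sqrt3}{2}$, $x_5$, $x_6$, $x_7$. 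Combining the two ranges gives $d(A,A') \geq \arccosh(x_n)$ for all distinct $A, A' \in \AA$, so the tube of radius $r_n = \arccosh(x_n)/2$ about $\Sigma_\OO$ is embedded.

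The main obstacle is the content of the previous paragraph: pinning down in \cite{gmmr, gehring-martin:commutators-collars} the sharp lower bound on $|\gamma(f,f')|$ that is valid for an \emph{arbitrary} pair of order--$n$ elliptics --- not just a conjugate pair, since $A$ and $A'$ may lie over different components of $\Sigma_\OO$ --- translating it, with the correct branch of the complex--distance identity, into a genuine lower bound on the real distance $\rho$, and carrying out the exceptional--group bookkeeping for $n = 4, 5, 6, 7$, where no single closed formula applies.
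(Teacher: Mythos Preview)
Your setup is correct and matches the paper: the tube radius equals half the minimal distance between distinct elliptic axes in $\AA$, and for $n=4$ and $n\geq 8$ this distance is bounded below by Gehring--Martin's $b(n)=\arccosh(x_n)$ directly from \cite{gehring-martin:commutators-collars}, exactly as you outline via the commutator parameter $\gamma(f,f')$. (Minor point: for $n=4$ the closed formula $-1+1/(2\sin^2(\pi/n))$ fails, but Gehring--Martin's theorem still gives $b(4)=x_4$ outright, so no separate enumeration is needed there.)

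The genuine gap is for $n\in\{5,6,7\}$. You propose to stay with the group $G=\langle f,f'\rangle$ generated by two order--$n$ elliptics and to enumerate the exceptional discrete groups with small axial distance, ruling them out because they contain $2$--torsion or spherical vertices. But the tables in \cite{gmmr} that actually carry out such an enumeration are organized around groups $\langle f,g\rangle$ with $g$ of order~$2$, not two order--$n$ elliptics; there is no ready-made list of exceptional $\langle f,f'\rangle$ groups to inspect, and you do not supply one. The paper closes this gap by a different device: \reflem{gavens-lemma} (which is \cite[Lemma~A.3]{atkinson-futer}) shows that for a link orbifold with all torsion order $n$, the maximal tube radius about $\Sigma_\OO$ equals $\delta(f,g)$ for a suitable two--generator Kleinian group with $f$ elliptic of order $n$, $g$ elliptic of order $2$, the axis of $f$ simple in $\langle f,g\rangle$, and an index--$2$ subgroup of $\langle f,g\rangle$ sitting inside $\Gamma$. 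This reduction is exactly what lets one read off the GMMR tables: for $n=5,6$ the short groups $G_{5,i}$, $G_{6,1}$ are eliminated because the axis of $f$ is not simple; for $n=7$ the short group $G_{7,1}$ (the $(2,3,7)$ triangle group) is eliminated because its index--$2$ subgroup would inject $3$--torsion into $\Gamma$. Your criteria (``contains $2$--torsion or a spherical triangle subgroup'') are not the ones that actually do the work, and without \reflem{gavens-lemma} you are not in a position to invoke the relevant tables at all.
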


\begin{proof}
This is a consequence of the work of Gehring, Maclachlan, Martin, and Reid \cite{gmmr, gehring-martin:commutators-collars} on axial distances in Kleinian groups. Let us describe the relevant results.

First, let $\OO = \HH^3 / \Gamma$ be any (orientable) hyperbolic $3$--orbifold. A singular geodesic $\alpha \subset \Sigma_\OO$ with torsion label $n \geq 2$ lifts to a $\Gamma$--orbit of geodesic axes in $\HH^3$, with each axis fixed pointwise by an elliptic subgroup $\ZZ_n \subset \Gamma$. When $n \geq 4$, Gehring and Martin \cite{gehring-martin:commutators-collars} showed that two distinct axes of $n$--torsion must be separated by distance at least $b(n)$, where
\[
\cosh b(n)  \: = \: 
	\begin{cases}
	   1 + \quotient{\sqrt{5}}{5} 
	  &\text{if } n =5\\ 
	   2
	  &\text{if } n =6\\ 
	  -1+ \quotient{1}{2 \sin^2(\pi/7) }
		&\text{if } n= 7 \\
	x_n & \text{if } n= 4 \text{ or } n \geq 8 .
	\end{cases}
\]
The definition of $b(n)$ appears in \cite[equations (4.16) and (4.17)]{gehring-martin:commutators-collars}, and the lower bound on axial distance is proved in Theorem 4.18 of that paper. See also \cite[Section 4]{gmm:axial-distances}.

If all torsion subgroups of $\Gamma$ are of the same order $n \geq 4$, and the elliptic axes are separated by distance at least $b(n)$, then tubes of radius $b(n)/2$ about these axes will project to disjointly embedded tubes about $\Sigma_\OO$ in $\OO = \HH^3 / \Gamma$. This proves the proposition for $n \neq 5,6,7$.

To complete the proof for these values of $n$, we need to relate orbifold groups to $2$--generator Kleinian groups. Note that a geodesic $\alpha \subset \Sigma_\OO$ is called \emph{simple} if its lift to $\HH^3$ is a disjoint union of hyperbolic lines, with disjoint endpoints at infinity. In a link orbifold, all singular geodesics are simple. The following statement is  \cite[Lemma A.3]{atkinson-futer}; see also \cite[Lemma 2.26]{gehring-martin:commutators-collars}.

\begin{lemma}\label{Lem:gavens-lemma}
Let $\OO = \HH^3 / \Gamma$ be a hyperbolic link orbifold, all of whose torsion orders are equal to $n \geq 3$. Then there is a Kleinian group $G = \langle f,g \rangle$, generated by an elliptic $f \in \Gamma$ of order $n$ and an elliptic $g$ of order $2$, such that the axis of $f$ is again simple in $G$, and such that the distance $\delta(f,g)$ between the axes of $f$ and $g$ is equal to the radius of a maximal tube about $\Sigma_\OO$. Furthermore, there is an index--$2$ subgroup $H \subset G$ that is also a subgroup of $\Gamma$.
\end{lemma}

 Gehring, Maclachlan, Martin, and Reid have gone a long way toward classifying such $2$--generator groups $G = \langle f,g \rangle$ for which the distance $\delta(f,g)$ is particularly small \cite{gmmr}.

When $n=5$, they show \cite[Table 3]{gmmr} that if $\delta(f,g) < 0.6097$, then $G$ must be conjugate to one of three groups $G_{5,i}$ for $i=1,2,3$. By \cite[Table 12]{gmmr}, these three groups have the property that the axis of $f$ is not simple. Thus, by \reflem{gavens-lemma}, these three groups cannot occur in our setting, when $\OO$ is a link orbifold. Hence, the radius of an embedded tube about $\Sigma_\OO$ is at least
\[ \delta(f,g) \geq 0.6097 \geq \arccosh(1.84028) / 2.
\]

When $n=6$, a similar argument applies. By \cite[Table 4]{gmmr}, we see that if $\delta(f,g) < 0.7642$, then $G$ must be the group $G_{6,1}$ that realizes the minimal axial distance for $n=6$. However, by \cite[Table 12]{gmmr},  the axis of $f$ is not simple in this group. Thus, under our hypotheses, the radius of an embedded tube about $\Sigma_\OO$ is at least
\[ \delta(f,g) \geq 0.7642 \geq \arccosh(2.41383) / 2.
\]

When $n=7$, \cite[Table 5]{gmmr} shows that if $\delta(f,g) < 0.8162$, then $G$ must be the group $G_{7,1}$ that realizes the minimal axial distance for $n=7$. For this group, the distance $\delta(f,g)$ between axes $f$ and $g$ is exactly $b(7)/2$. As Gehring and Martin describe in \cite[Example 8.11]{gehring-martin:commutators-collars}, this group $G_{7,1}$ is the $(2,3,7)$ triangle group, which contains $3$--torsion in addition to $7$--torsion. But then the index--$2$ subgroup $H \subset G$ would also contain $3$--torsion, which is a contradiction because $H \subset \Gamma$ and all torsion orders of $\Gamma$ are $7$. Thus, under our hypotheses, the radius of an embedded tube about $\Sigma_\OO$ is at least
\[ \delta(f,g) \geq 0.8162 \geq \arccosh(2.65579) / 2,
\]
completing the proof.
\end{proof}

\begin{remark}
A modified version of the above argument applies for $n=3$, with more special cases to consider. See \cite[Theorem A.1]{atkinson-futer} for the best available statement.
\end{remark}

The argument now proceeds by considering the cusped manifold $M_{\OO}$
obtained by drilling $\Sigma_{\OO}$ out from $\OO$. We use the following result 
to estimate the change in volume when drilling.

\begin{proposition}\label{Prop:drillbound}
Let $\OO$ be a finite--volume, orientable, hyperbolic  link orbifold, all of whose torsion orders are equal to $n \geq 4$.
Let $M_{\OO} = X_{\OO} \setminus \Sigma_{\OO}$ be the cusped hyperbolic manifold
obtained by drilling out $\Sigma_\OO$. Then 
\[
\frac{\vol(\OO)}{\vol(M_{\OO})} \: \geq \:   \frac{( x_n^2 -1)^{3/2}}{x_n^3} 
\left( 1 + \frac{0.91}{x_n} \right)^{-1},
\]
where $x_n$ is as in \refdef{xn}.
\end{proposition}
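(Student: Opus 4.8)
The proof combines \refprop{tube} with the explicit drilling estimate of Agol and Dunfield \cite{ast}. In the form we need, their result states that if $N$ is a finite--volume orientable hyperbolic $3$--manifold and $\Sigma \subset N$ is a geodesic link admitting an embedded tube of radius $R$, then $N \setminus \Sigma$ carries a complete hyperbolic metric and
\[
\frac{\vol(N)}{\vol(N \setminus \Sigma)} \; \geq \; \phi(R), \qquad \phi(R) \; = \; \frac{(\cosh^2(2R) - 1)^{3/2}}{\cosh^3(2R)}\left(1 + \frac{0.91}{\cosh(2R)}\right)^{-1}.
\]
Two properties of $\phi$ will be used. It is strictly increasing in $R$, being the product of the positive increasing functions $\tanh^3(2R)$ and $\bigl(1 + 0.91/\cosh(2R)\bigr)^{-1}$, each of which tends to $1$ as $R \to \infty$. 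And if $R = \tfrac{1}{2}\arccosh(x_n)$, so that $\cosh(2R) = x_n$, then $\phi(R)$ equals exactly the right--hand side of the proposition. Since \refprop{tube} tells us that the tube radius about $\Sigma_\OO$ is at least $r_n = \tfrac{1}{2}\arccosh(x_n)$, the proposition will follow once \cite{ast} has been applied; the only real work is justifying this in the orbifold setting.

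As \cite{ast} is stated for manifolds, I first reduce to that case. By Selberg's lemma, the orbifold fundamental group $\Gamma = \pi_1(\OO)$ contains a torsion--free subgroup $\Gamma'$ of some finite index $d$; put $N = \HH^3/\Gamma'$, a finite--volume hyperbolic manifold, and let $\Sigma' \subset N$ be the preimage of the singular locus $\Sigma_\OO$, which is a geodesic link. The orbifold covering $N \to \OO$ is a local isometry, so the embedded tube of radius $r_n$ about $\Sigma_\OO$ in $\OO$ pulls back to an embedded tube of radius $r_n$ about $\Sigma'$ in $N$; thus $\Sigma'$ has tube radius at least $r_n$. Restricting $N \to \OO$ to $N \setminus \Sigma'$ yields an honest $d$--fold covering onto $M_\OO = X_\OO \setminus \Sigma_\OO$, so $N \setminus \Sigma'$ is hyperbolic, being a finite cover of the hyperbolic manifold $M_\OO$, with $\vol(N \setminus \Sigma') = d\,\vol(M_\OO)$, while $\vol(N) = d\,\vol(\OO)$. (Alternatively, the argument of \cite{ast} can be run directly for orbifolds, making this reduction unnecessary.)

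Applying the drilling estimate to the pair $(N, \Sigma')$, using that $\phi$ is increasing together with the bound $R \geq r_n$, and cancelling the factor $d$, we conclude
\[
\frac{\vol(\OO)}{\vol(M_\OO)} \; = \; \frac{\vol(N)}{\vol(N \setminus \Sigma')} \; \geq \; \phi(r_n) \; = \; \frac{(x_n^2 - 1)^{3/2}}{x_n^3}\left(1 + \frac{0.91}{x_n}\right)^{-1},
\]
which is the claimed inequality. The step I expect to require the most care is extracting the estimate in precisely the above form: the bound in the main text of \cite{ast} is weaker than the displayed $\phi$, and it is Dunfield's appendix --- where the interpolating metric on the tube region is optimized --- that furnishes the refinement recorded by the correction factor $\bigl(1 + 0.91/x_n\bigr)^{-1}$. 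This sharper form is genuinely needed: the arguments of Sections~\ref{Sec:computer} and \ref{Sec:calculus} rely on it. One should also note that the hypotheses of \cite{ast} hold here --- $\Sigma'$ is a geodesic link carrying an embedded tube by construction, and $N \setminus \Sigma'$ is hyperbolic as observed above.
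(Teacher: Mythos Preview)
Your proof is correct and follows essentially the same strategy as the paper: feed the tube radius bound from \refprop{tube} into the Agol--Dunfield drilling estimate, then substitute $x_n = \cosh 2r_n$. The only noteworthy difference is that where the paper simply cites \cite[Proposition 4.3]{atkinson-futer} for the orbifold version of the drilling bound, you instead write out the Selberg's lemma reduction to a manifold cover and divide out by the degree; this is a perfectly legitimate and transparent alternative.
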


\begin{proof}
\refprop{tube} gives a lower bound $r_n$ on the radius of an embedded tubular neighborhood of $\Sigma_\OO$, where 
\[x_n = \cosh  2 r_n \, . \]
Now, a theorem of Agol and Dunfield (see \cite[Theorem 10.1]{ast} and \cite[Page 2299]{agol-culler-shalen}) implies
\[
\frac{\vol(M_{\OO})}{\vol(\OO)} \: \leq \:  (\coth^3{2r_n})
\left( 1 + \frac{0.91}{\cosh{2r_n}} \right) \: = \: \left(  \frac{x_n}{\sqrt{ x_n^2 -1}} \right)^{\! 3}
\left( 1 + \frac{0.91}{x_n} \right).
\]
See  \cite[Proposition 4.3]{atkinson-futer} for a proof of the above estimate in the setting of link orbifolds. Taking reciprocals gives the estimate in the statement of the proposition.
\end{proof}

\begin{theorem}[Gabai--Meyerhoff--Milley \cite{gmm:smallest-cusped}]\label{Thm:moms}
Let $N$ be a cusped orientable hyperbolic $3$--manifold. If $\vol(N) \leq 2.848$, then $N$ has exactly one cusp, and is one of the Snappea census manifolds $\tt m003$,
$\tt m004$, $\tt m006$, $\tt m007$, $\tt m009$, $\tt m010$, $\tt m011$, $\tt m015$, $\tt m016$, or $\tt m017$.

Furthermore, if $\vol(N) \leq 2.568$, then $N = \tt m003$ or
$\tt m004$, and $\vol(N) = 2V_3 = 2.02988...$
\end{theorem}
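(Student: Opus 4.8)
This is the main theorem of Gabai, Meyerhoff, and Milley \cite{gmm:smallest-cusped}, and the plan is to follow their ``Mom technology'' program, which converts the enumeration of small-volume cusped manifolds into a finite Dehn-filling search. First I would settle the number of cusps: known lower bounds on the volume of a hyperbolic $3$--manifold with more than one cusp --- coming from Adams' estimates on the area of a maximal cusp, or from Agol's sharp value $V_{\mathrm{oct}} \approx 3.66$ for the two-cusped case --- show that any $N$ with $\vol(N) \leq 2.848$ has exactly one cusp. So from now on $N$ is one-cusped.

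The central step is to prove that $N$ carries an embedded \emph{internal Mom-$n$ structure} for some $n \leq 3$: a handle decomposition built from a maximal horospherical cusp torus together with at most three ``orthogonal'' geodesic $1$--handles, each running from the cusp to itself along a short return path. The mechanism is that small volume, combined with the a priori lower bound on the area of the maximal cusp torus, forces the lifted horoball packing of $\HH^3$ to be tight, which in turn produces several short orthogonal arcs; a long and delicate case analysis then shows that from these arcs one can always extract a valid Mom-$2$ or Mom-$3$ structure, excluding every configuration in which the construction might degenerate. This case analysis is the technical heart of \cite{gmm:smallest-cusped}, and I expect essentially all of the difficulty of the theorem to live here.

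Next I would invoke the classification of hyperbolic manifolds admitting an internal Mom-$n$ structure with $n \leq 3$: there are only finitely many combinatorial ``Mom manifolds'' realizing such a structure, and every hyperbolic manifold carrying one is obtained by Dehn filling a manifold on a short explicit list. Since Dehn filling strictly decreases volume (Thurston) and the Mom manifolds have known volumes larger than $2.848$, only finitely many filling slopes on each can yield a manifold of volume $\leq 2.848$: filling along a long slope changes the volume by only a small amount, so a small target volume is reached only along short slopes, of which there are finitely many. This bounds the search to an explicit finite list of candidate fillings. A rigorous computation of the hyperbolic structures on these candidates --- using interval arithmetic to certify the volumes and to detect isometries, so that the resulting census is provably complete and non-redundant --- then identifies exactly the ten SnapPea census manifolds $\tt m003$, $\tt m004$, $\tt m006$, $\tt m007$, $\tt m009$, $\tt m010$, $\tt m011$, $\tt m015$, $\tt m016$, $\tt m017$ and their volumes, proving the first assertion.

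The second assertion is then immediate from the computed list. The two smallest entries are $\vol(\tt m003) = \vol(\tt m004) = 2V_3 = 2.02988\ldots$, while every other manifold on the list has volume strictly greater than $2.568$ (the next-smallest value is $\approx 2.56897$). Hence $\vol(N) \leq 2.568$ forces $N \in \{\tt m003, \tt m004\}$, and in that case $\vol(N) = 2V_3$.
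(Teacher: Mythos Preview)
Your outline is a fair high-level summary of the Gabai--Meyerhoff--Milley program, but the paper does not attempt to reprove any of it: its ``proof'' of this theorem is simply a citation. Specifically, the paper invokes \cite[Corollary 1.2]{gmm:smallest-cusped} for the enumeration of the ten manifolds, cites Cao--Meyerhoff \cite{cao-meyerhoff} for the identification of $\tt m003$ and $\tt m004$ as the volume minimizers at $2V_3$, and then records the single numerical fact $\vol({\tt m006}) \geq 2.5689$ to justify the ``furthermore'' clause. No part of the Mom-technology argument is reproduced.

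So your proposal is not wrong, but it is doing vastly more work than the paper does or needs: for the purposes of this paper the theorem is a black box, and the only content added is the observation about the gap between $2V_3$ and $\vol({\tt m006})$. One small quibble with your sketch: the one-cusp conclusion in \cite{gmm:smallest-cusped} is read off directly from the final enumerated list rather than being established in advance via Agol's two-cusp bound, though of course your route also works.
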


\begin{proof}
The enumeration of cusped manifolds of volume at most $2.848$ is  \cite[Corollary 1.2]{gmm:smallest-cusped}. The two volume minimizers on the list, $\tt m003$ and $\tt m004$, have volume $2V_3$ and were previously identified by Cao and Meyerhoff \cite{cao-meyerhoff}. The next smallest volume is $\vol({\tt m006}) \geq 2.5689$. 
\end{proof}

We can now prove the main result of this section.

\begin{table}
\begin{tabular}{| c | c | c | c | c |}
\hline
$n$  & \parbox{0.19\textwidth}{Lower bound if $\vol(M_\OO) \geq 2.568$} & \parbox{0.19\textwidth}{Lower bound if \\ $\vol(M_\OO) \geq 2.848$} & \parbox{0.20\textwidth}{Lower bound if $\vol(M_\OO) \geq 3.6638$} & $\vol(\PP_n)$ \\
\hline
4 & \textcolor{light-gray}{0.48729} & 0.54043 & 0.69524 & 0.50747 \\
5 & 1.01654 & \textcolor{light-gray}{1.12738} & 1.45034 & 0.93720 \\ 
6 & 1.40605 & \textcolor{light-gray}{1.55936} & 2.00606 & 1.22128   \\
7 & 1.52065 & \textcolor{light-gray}{1.68646} & 2.16958 & 1.41175   \\
8 & \textcolor{light-gray}{1.40625} & 1.55958 & \textcolor{light-gray}{2.00635} & 1.54386  \\
9 &  1.72961 & \textcolor{light-gray}{1.92420} & \textcolor{light-gray}{2.47542} & 1.63860 \\
10 & 1.93362 & \textcolor{light-gray}{2.15116} & \textcolor{light-gray}{2.76740} & 1.70857  \\
11 & 2.06917 & \textcolor{light-gray}{2.30195} & \textcolor{light-gray}{2.96139} & 1.76158  \\
$\geq 11$ &  $\geq 2.06917$ & \textcolor{light-gray}{$\geq 2.30195$} & \textcolor{light-gray}{$\geq 2.96139$} & $ < 2.02990 $ \\ 
\hline
\end{tabular}
\vspace{1ex}

\caption{\refprop{drillbound} gives lower bounds on the volume
of an orbifold $\OO\in L_n$, if one assumes a lower bound on the volume of
the cusped manifold $M_\OO = \OO \setminus \Sigma_\OO$. All values are
truncated at 5 decimal places. Values listed in gray appear for completeness
only, and are not used in our arguments.} \label{Tab:MomBound}
\end{table}

\begin{theorem}\label{Thm:few-parents}
Suppose that $n \geq 4$, and let $\OO \in L_n$ be a volume minimizer in $L_n$.
Then $M_\OO = \OO \setminus \Sigma_\OO$ is one of the Snappea census manifolds $\tt m003$,
$\tt m004$, $\tt m006$, $\tt m007$, $\tt m009$, $\tt m010$, $\tt m011$, $\tt m015$, $\tt m016$, or $\tt m017$.

Furthermore, if $n \neq 4,8$, then $M_\OO = \OO \setminus \Sigma_\OO$ is one of $\tt m003$ or $\tt m004$.
\end{theorem}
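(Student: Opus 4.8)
The plan is to combine the drilling inequality of \refprop{drillbound} with the volume census of \refthm{moms}, using only the observation that $\PP_n$ itself lies in $L_n$, so any volume minimizer $\OO \in L_n$ satisfies $\vol(\OO) \leq \vol(\PP_n)$. By \refprop{reduction}, such a minimizer $\OO$ is a link orbifold with every singular component labeled $n$, so the hypotheses of \refprop{drillbound} are met. With $x_n$ as in \refdef{xn}, rearranging \refprop{drillbound} shows that for any constant $T$,
\[
\vol(M_\OO) \geq T
\quad\Longrightarrow\quad
\vol(\OO) \ \geq\ T\cdot\frac{(x_n^2-1)^{3/2}}{x_n^3}\Bigl(1+\frac{0.91}{x_n}\Bigr)^{-1},
\]
and for $T=2.568$ and $T=2.848$ the right-hand side is exactly the number recorded in \reftab{MomBound} for that value of $n$.

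For the first assertion I would argue by contraposition: if $M_\OO$ is not one of the ten listed census manifolds, then \refthm{moms} forces $\vol(M_\OO) > 2.848$, so the implication above with $T=2.848$ makes $\vol(\OO)$ strictly larger than the value recorded in \reftab{MomBound} for this $n$ under the hypothesis $\vol(M_\OO)\geq 2.848$. I would then check that this value exceeds $\vol(\PP_n)$ for every $n \geq 4$: for $4 \leq n \leq 11$ this is a direct comparison of two columns of \reftab{MomBound}, and for $n \geq 11$ it follows from the bottom row of that table, whose validity rests only on the elementary facts that $x_n = -1 + 1/(2\sin^2(\pi/n))$ increases with $n$, that $t \mapsto (t^2-1)^{3/2}t^{-3}(1+0.91/t)^{-1}$ increases for $t > 1$, and that $\vol(\PP_n) < 2V_3$ for all $n \geq 4$. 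This gives $\vol(\OO) > \vol(\PP_n)$, contradicting minimality, so $M_\OO$ must be one of the ten.

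For the second assertion I would similarly assume $n \neq 4, 8$ and suppose $M_\OO$ is neither $\tt m003$ nor $\tt m004$. Since $M_\OO$ is a cusped orientable hyperbolic $3$--manifold, the second part of \refthm{moms} gives $\vol(M_\OO) > 2.568$, so the implication above with $T=2.568$ makes $\vol(\OO)$ strictly larger than the value recorded in \reftab{MomBound} for this $n$ under the hypothesis $\vol(M_\OO)\geq 2.568$. For $n \in \{5,6,7,9,10,11\}$, and for all $n \geq 11$ via the bottom row of the table, this value exceeds $\vol(\PP_n)$, so again $\vol(\OO) > \vol(\PP_n)$, a contradiction. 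Hence $M_\OO$ is $\tt m003$ or $\tt m004$. For $n=4$ and $n=8$ the relevant table entry instead lies \emph{below} $\vol(\PP_n)$, so this argument breaks down; that is why these two values are excluded from the second assertion and are handled instead by the computer-assisted analysis of \refsec{computer}.

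Once \refprop{tube}, \refprop{drillbound}, and \refthm{moms} are available, the proof is essentially the numerical bookkeeping of \reftab{MomBound}, so I do not expect a serious obstacle. The tightest point will be the $n=8$ case of the first assertion, where the $\vol(M_\OO)\geq 2.848$ bound exceeds $\vol(\PP_8)$ by only about one percent; there one must use the tube-radius estimate of \refprop{tube} with the exact value $x_8 = 1 + \sqrt{2}$ rather than a rounding.
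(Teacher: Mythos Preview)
Your argument is correct and follows the same overall scheme as the paper: apply \refprop{reduction}, feed the tube radius from \refprop{tube} into \refprop{drillbound}, multiply by the thresholds $2.848$ and $2.568$ from \refthm{moms}, and compare against $\vol(\PP_n)$ via \reftab{MomBound}. The only notable difference is your treatment of $n>11$: you justify the bottom row of the table by observing that $x_n$ and the drilling ratio $(x_n^2-1)^{3/2}x_n^{-3}(1+0.91/x_n)^{-1}$ are both increasing in $n$, whereas the paper instead invokes \reflem{reduction} to relabel $\OO$ with $11$--torsion and inherit the $n=11$ bound. Your monotonicity argument is slightly more elementary here, since it avoids the appeal to Schl\"afli's formula hidden in \reflem{reduction}; the paper's route, on the other hand, lets it prove the first assertion for $n\neq 4,8$ as an immediate corollary of the second, so it never needs the grayed-out $T=2.848$ entries that you use.
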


\begin{proof}
By \refprop{reduction}, $\OO$ is a link orbifold, such that each component of $\Sigma_\OO$ has torsion order exactly $n$. Now, \refprop{drillbound} gives a lower bound on the ratio  $\vol(\OO)/\vol(M_\OO)$, and \refthm{moms} gives a lower bound on $\vol(M_\OO)$. Multiplying these together gives a lower bound on $\vol(\OO)$; see \reftab{MomBound}.

When $n =4$ or $n = 8$, we look for lower bounds  for $\vol(\OO)$ under the assumption that $\vol(M_\OO) \geq 2.848$. As \reftab{MomBound} shows, the lower bound for $\vol(\OO)$ exceeds the volume of $\PP_n$. Thus a volume minimizer in $L_n$ must come from one of the ten census manifolds enumerated in \refthm{moms}.

When $n \geq 5$ and $n \neq 8$, we obtain sufficiently good lower bounds for $\vol(\OO)$ under the weaker assumption that $\vol(M_\OO) \geq 2.568$. As \reftab{MomBound} shows, the lower bound for $\vol(\OO)$ exceeds the volume of $\PP_n$ when $5 \leq n \leq 11$ (except for $n=8$). In fact, for $n = 11$, the lower bound for $\vol(\OO)$ exceeds the volume of the figure--8 knot complement $\tt m004$, which is greater than $\vol(\PP_n)$ for every $n$. For $n > 11$, the same lower bound on $\vol(\OO)$ applies, since by \reflem{reduction} the volume can only go down when we change the torsion labels of $\OO$ to $11$.
Thus, if $\OO$ is a volume minimizer in $L_n$ for $n \geq 5$ and $n \neq 8$,
\refthm{moms} tells us that $M_\OO$ is one of $\tt m003$ or $\tt
m004$.
\end{proof}

We also obtain the following statement, which will be useful in \refsec{symmetry}.

\begin{theorem}\label{Thm:multicusped-drill}
Suppose that $n \geq 4$, and let $\OO \in L_n$ be a link orbifold that is either non-compact or has disconnected singular locus.
Then the volume of $\OO$ is bounded below by the value in the next-to-last column of \reftab{MomBound}.
\end{theorem}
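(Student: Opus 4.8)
The plan is to deduce this from \refprop{drillbound} together with Agol's identification of the minimal--volume orientable $2$--cusped hyperbolic $3$--manifolds. The point is that, under either hypothesis, the drilled manifold $M_\OO = X_\OO\setminus\Sigma_\OO$ has at least two cusps, so its volume is at least $3.6638\ldots$ rather than merely the one--cusp bound of \refthm{moms}. If $\vol(\OO)=\infty$ there is nothing to prove, so assume $\OO$ has finite volume. First I would reduce to the case that every component of $\Sigma_\OO$ has torsion order exactly $n$: by \reflem{reduction} (whose extra hypothesis is vacuous since $n\geq 4$) there is a hyperbolic link orbifold $\OO'_n$ with the same base space and singular locus, all torsion orders equal to $n$, and $\vol(\OO'_n)\leq\vol(\OO)$. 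Since the pair $(X_{\OO'_n},\Sigma_{\OO'_n})$ equals $(X_\OO,\Sigma_\OO)$, the orbifold $\OO'_n$ is again non-compact (resp.\ has disconnected singular locus) and $M_{\OO'_n}=M_\OO$; so it suffices to bound $\vol(\OO'_n)$, and we may assume all torsion orders of $\OO$ equal $n$.

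Next I would count the cusps of $M_\OO$. As $\Sigma_\OO$ is a closed $1$--manifold it is compact, hence disjoint from a neighborhood of each end of $X_\OO$; therefore every cusp of $\OO$ survives as a cusp of $M_\OO$, and in addition each component of $\Sigma_\OO$ yields a new cusp of $M_\OO$ when drilled. If $\OO$ is non-compact it has at least one cusp, and $\Sigma_\OO$ is non-empty because $\OO\in L_n$, so $M_\OO$ has at least $1+1=2$ cusps; if instead $\Sigma_\OO$ is disconnected, it has at least two components and $M_\OO$ again has at least two cusps. Thus $M_\OO$ is an orientable cusped hyperbolic $3$--manifold with at least two cusps. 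By Agol's theorem \cite{agol:2cusped}, the minimal volume of such a manifold is $3.66386\ldots$, attained only by the Whitehead link and $(-2,3,8)$--pretzel link complements; and if $M_\OO$ has more than two cusps, Thurston's hyperbolic Dehn surgery theorem lets us fill cusps down to exactly two while staying hyperbolic and strictly decreasing volume. In all cases $\vol(M_\OO)\geq 3.66386\ldots > 3.6638$.

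Finally I would feed this into \refprop{drillbound}: since all torsion orders equal $n$,
\[
\vol(\OO)\ \geq\ \vol(M_\OO)\cdot\frac{(x_n^2-1)^{3/2}}{x_n^3}\left(1+\frac{0.91}{x_n}\right)^{-1}\ \geq\ 3.6638\cdot\frac{(x_n^2-1)^{3/2}}{x_n^3}\left(1+\frac{0.91}{x_n}\right)^{-1},
\]
which is precisely the entry in the next-to-last column of \reftab{MomBound} when $4\leq n\leq 11$. For $n\geq 11$, one further application of \reflem{reduction} (changing all torsion orders to $11$) together with the $n=11$ case gives $\vol(\OO)\geq 2.96139$, matching the ``$\geq 11$'' row. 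The steps I expect to require the most care are the cusp count in the non-compact case --- in particular, verifying that the cusps of $\OO$ are disjoint from $\Sigma_\OO$ and hence persist after drilling --- and the (standard) passage from Agol's ``exactly two cusps'' statement to the ``at least two cusps'' bound invoked here.
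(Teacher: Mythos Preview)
Your proposal is correct and follows essentially the same route as the paper's proof: reduce all torsion orders to $n$ via \reflem{reduction}, observe that $M_\OO$ has at least two cusps so Agol's bound $\vol(M_\OO)\geq V_8 = 3.6638\ldots$ applies, and then multiply by the ratio from \refprop{drillbound}. Your treatment of the $n\geq 11$ row via a second application of \reflem{reduction} matches how the paper handles the analogous issue in the proof of \refthm{few-parents}, and your extra care about cusp-counting and the infinite-volume case simply makes explicit what the paper leaves implicit.
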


\begin{proof}
We may assume without loss of generality that all torsion orders of $\OO$ are equal to $n$. (Otherwise, replace $\OO$ by  $\OO'_n$ as in \reflem{reduction}, reducing volume in the process.)

Let $M_\OO$ be the cusped $3$--manifold obtained by drilling out $\Sigma_\OO$. Under our hypotheses, $M_\OO$ has at least two cusps, hence the work of Agol \cite{agol:2cusped} implies that $\vol(M_\OO) \geq V_8 = 3.6638 \dots$, where $V_8$ is the volume of a regular ideal octahedron. Now, we may apply \refprop{drillbound}, as in the last proof, and obtain the volume estimates in the next-to-last column of \reftab{MomBound}.
\end{proof}

 \section{The case $n \leq 14$: Computer analysis of Dehn fillings}\label{Sec:computer}

Following \refthm{few-parents}, we know that for a volume minimizer $\OO \in L_n$, the complement of the singular locus is one of ten cusped hyperbolic manifolds. In other words, $\OO$ is obtained by \emph{orbifold Dehn filling} one of these ten cusped manifolds. Let us review what this means.

\begin{definition}
Suppose that $M$ is a hyperbolic $3$--manifold with a torus cusp
$C$.  A \emph{slope} on $C$ is an unoriented, non--trivial homology class on
$\partial C$. If we endow $H_1(\partial C) \cong \ZZ^2$ with a basis $\langle
\mu, \lambda \rangle$, then we may write $s \in H_1(\partial C)$ as
$\pm(a\mu+b \lambda)$.  If $a$ and $b$ are relatively prime, then the Dehn
filling $M(s)$ is the manifold resulting from attaching a non--singular
solid torus to $M$, with the meridian disk mapped to $s$.  If $(a,b) = n(a',b')$,  
where $n > 1$ and $(a',b')$ is a primitive element of $\ZZ^2$,
then $M(s)$ is a link orbifold obtained by attaching a singular
solid torus to $M$, with the meridian disk mapped to $a' \mu + b' \lambda$, and with the core curve 
carrying the torsion label
$n=\gcd(a,b)$. 
\end{definition}

To prove \refthm{main}, it remains to find \emph{which} Dehn filling of the cusped manifolds of \refthm{few-parents} has the smallest volume. For any given value of $n$, we can restrict this to a finite search via the following theorem of 
Futer, Kalfagianni,
and Purcell~\cite[Theorem 1.1]{fkp:volume}.  We state it here for filling along a single cusp; see  \cite[Theorem 4.5]{atkinson-futer} for a general statement in the context of orbifolds with multiple cusps.

\begin{theorem}[Futer--Kalfagianni--Purcell \cite{fkp:volume}]\label{Thm:fkp}
Let $M$ be a complete, finite--volume hyperbolic $3$--manifold. Let $C$ be an embedded horospherical cusp neighborhood in $M$, and let $s$ be a slope on $\bdy C$. If the length of $s$ is $\ell(s) > 2\pi$, the Dehn filling $M(s)$ is hyperbolic, and
$$\vol(M(s)) \geq \left( 1- \left(
\frac{2\pi}{\ell(s)}\right)^2 \right)^{3/2} \vol(M).$$
\end{theorem}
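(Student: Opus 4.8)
The plan is to connect the complete hyperbolic structure on $M$ to the hyperbolic structure on $M(s)$ through a path of hyperbolic cone manifolds and to control the volume along this path with Schl\"afli's formula. First I would regard $M(s)$ topologically as $M$ with a solid torus glued to $\bdy C$ so that the meridian disk is attached along $s$; let $\Sigma$ denote the core curve of this solid torus. For $\theta\in[0,2\pi]$, let $M_\theta$ be the hyperbolic cone structure on $M(s)$ with cone locus $\Sigma$ and cone angle $\theta$: at $\theta=2\pi$ the cone structure is smooth and equals the desired complete structure on $M(s)$, while as $\theta\to 0$ the cone geodesic $\Sigma$ opens into a rank--two cusp and $M_\theta\to M$. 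The first task is to produce this family for all $\theta\in[0,2\pi]$. Running the deformation down from the complete structure on $M$, the Hodgson--Kerckhoff theory \cite{hodgson-kerckhoff} guarantees that it can be continued as long as the tube about $\Sigma$ does not degenerate, and the hypothesis $\ell(s)>2\pi$ is exactly the input --- the same quantitative threshold that appears in the Gromov--Thurston $2\pi$--theorem \cite{thurston:notes} --- that rules out degeneration before the cone angle $2\pi$ is reached. In particular this already shows that $M(s)$ is hyperbolic.

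On the family $\{M_\theta\}$, Schl\"afli's variation formula for cone manifolds gives
\[
\frac{d}{d\theta}\,\vol(M_\theta)\;=\;-\tfrac{1}{2}\,\ell_\theta ,
\]
where $\ell_\theta$ is the length of the cone geodesic $\Sigma$ in $M_\theta$. Integrating from $0$ to $2\pi$ and using $\vol(M_0)=\vol(M)$, $\vol(M_{2\pi})=\vol(M(s))$,
\[
\vol(M)-\vol(M(s))\;=\;\tfrac{1}{2}\int_0^{2\pi}\ell_\theta\,d\theta ,
\]
so it would suffice to show $\int_0^{2\pi}\ell_\theta\,d\theta\le 2\bigl[1-(1-(2\pi/\ell(s))^2)^{3/2}\bigr]\vol(M)$. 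For this I would invoke the quantitative estimates of Hodgson--Kerckhoff on the harmonic (infinitesimal) cone deformation: there is a normalized length of the meridian curve on the boundary of the maximal embedded tube about $\Sigma$ which behaves monotonically along the deformation, equals $\ell(s)$ (normalized against the cusp $C$) at $\theta=0$, and bounds $\ell_\theta$ from above for every $\theta$. Feeding this bound into the integral and comparing term by term against the corresponding quantities for the rotationally symmetric model tube --- where the exponent $3/2$ emerges from an elementary integral of the form $\int t\sqrt{1-t^2}\,dt$ --- would yield the displayed inequality, and hence the theorem. As a consistency check, the trivial direction $\vol(M(s))\le\vol(M)$ follows from the monotonicity of simplicial volume under Dehn filling, and the estimate degenerates to this as $\ell(s)\to\infty$.

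The analytic heart, and the main obstacle, is the length estimate of the second paragraph: extracting from the Hodgson--Kerckhoff machinery a bound on $\int_0^{2\pi}\ell_\theta\,d\theta$ sharp enough to produce \emph{exactly} the exponent $3/2$ and the constant $2\pi$, rather than merely some function of $\ell(s)$ tending to $1$. This requires the full strength of their $L^2$ estimates on harmonic deformations together with careful control of how the tube radius evolves, plus some bookkeeping to translate between the length of $s$ measured on the embedded cusp $C$ (the form of the hypothesis) and the normalized lengths natural to the deformation theory. The continuation--to--$2\pi$ claim in the first paragraph is really the same estimate used to guarantee non--degeneration, so in practice the two steps would be carried out together.
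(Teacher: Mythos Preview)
The paper does not prove this theorem; it is quoted from \cite{fkp:volume} and used as a black box. So there is no ``paper's own proof'' to compare against here, but it is still worth comparing your sketch to the actual argument in \cite{fkp:volume}, because your route is not the one taken there and, as written, it does not go through.

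The proof in \cite{fkp:volume} does \emph{not} use cone deformations or the Hodgson--Kerckhoff analytic machinery at all. Instead it proceeds in two steps. First, following the explicit Bleiler--Hodgson/Gromov--Thurston construction behind the $2\pi$--theorem, one builds a Riemannian metric $\sigma$ on $M(s)$ that is hyperbolic outside the cusp $C$ and is an explicit warped-product metric on the glued-in solid torus; one checks by direct computation that after a global rescaling the sectional curvatures satisfy $K_\sigma\le -1$ and that the rescaled volume is at least $\bigl(1-(2\pi/\ell(s))^2\bigr)^{3/2}\vol(M)$. The exponent $3/2$ arises here for the mundane reason that rescaling a $3$--dimensional metric by $\lambda$ rescales volume by $\lambda^3$ and curvature by $\lambda^{-2}$. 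Second, one invokes the Besson--Courtois--Gallot volume comparison (in the finite-volume form due to Boland--Connell--Souto): among all metrics on a hyperbolic manifold with $K\le -1$, the hyperbolic metric minimizes volume. Combining the two steps gives the inequality.

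Your proposal has a genuine gap at the point where you assert that ``the hypothesis $\ell(s)>2\pi$ is exactly the input \dots\ that rules out degeneration before the cone angle $2\pi$ is reached.'' It is not. The Hodgson--Kerckhoff continuation theorem is stated in terms of the \emph{normalized} length $L(s)=\ell(s)/\sqrt{\area(\bdy C)}$, and the threshold is $L(s)^2\ge (2\pi)^2/f(1/\sqrt 3)\approx 57.5$, i.e.\ $L(s)\gtrsim 7.58$ (compare \refthm{HK-estimate} in this paper). The raw-length condition $\ell(s)>2\pi$ can hold with $L(s)$ arbitrarily close to $2\pi/\sqrt{\area(\bdy C)}$, which for a typical maximal cusp is far below the HK threshold. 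So the cone-deformation family $\{M_\theta\}_{\theta\in[0,2\pi]}$ is not known to exist under the stated hypothesis, and your Schl\"afli integral is not available. Even in the regime where the HK deformation does exist, the bounds one extracts (exactly the functions $l,u,f,\tilde f$ of \refthm{HK-estimate}) have a different functional form and do not reduce to $(1-(2\pi/\ell(s))^2)^{3/2}$; indeed this paper uses \refthm{fkp} and \refthm{HK-estimate} for complementary purposes precisely because neither dominates the other.
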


We can now prove \refthm{main} in the case of $4 \leq n \leq 14$.

 \begin{theorem}\label{Thm:n-less14}
  Let  $\OO$ be a volume minimizer in $L_n$, where $4 \leq n \leq 14$.
  Then $\OO =  \PP_n$.
 \end{theorem}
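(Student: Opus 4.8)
The plan is to combine \refthm{few-parents} with \refthm{fkp} to reduce, for each fixed $n$ in the range $4 \le n \le 14$, the search for a volume minimizer to a finite list of Dehn fillings, and then to check that list by rigorous computer calculation. The starting point is that by \refthm{few-parents}, $M_\OO = \OO \setminus \Sigma_\OO$ is one of the ten census manifolds $\tt m003$, $\tt m004$, $\tt m006$, $\tt m007$, $\tt m009$, $\tt m010$, $\tt m011$, $\tt m015$, $\tt m016$, $\tt m017$, each of which has a single torus cusp; moreover $\OO$ is obtained from one of these by an orbifold Dehn filling whose core curve carries torsion label exactly $n$ (by \refprop{reduction}). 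So $\OO = M(s)$ where $s = n\cdot s'$ for a primitive slope $s'$, and I know a priori that $\vol(\OO) \le \vol(\PP_n)$ since $\PP_n \in L_n$. The reference orbifold $\PP_n$ itself arises in this way: it is an orbifold filling of $\tt m004$, the figure--$8$ knot complement.

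First I would fix a maximal embedded horospherical cusp neighborhood $C$ in each of the ten parent manifolds, and record (via rigorous interval arithmetic, using SnapPy/HIKMOT-style verified computation) the cusp shape and a basis for $H_1(\bdy C)$, hence the length function $\ell$ on slopes. The six shortest translations on each maximal cusp torus are bounded, so there are only finitely many slopes $s = n s'$ with $\ell(s) \le 2\pi$. For every slope with $\ell(s) > 2\pi$, \refthm{fkp} gives $\vol(M(s)) \ge (1 - (2\pi/\ell(s))^2)^{3/2}\vol(M)$; since $\vol(M) \ge 2V_3 > 2.02$ for all ten parents, one checks that this lower bound exceeds $\vol(\PP_n)$ once $\ell(s)$ is large enough, leaving only finitely many candidate slopes per parent. (One must also discard slopes whose filling produces the wrong torsion label — i.e. slopes not of the form $n s'$ — and slopes producing a non-hyperbolic or lower-dimensional orbifold.) This yields, for each $n$, an explicit finite list of candidate orbifolds $\OO$.

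Next I would, following Milley \cite{milley}, rigorously compute the volume of each candidate on this finite list, again with verified interval arithmetic so the inequalities are genuine proofs rather than numerics. For all candidates other than the filling of $\tt m004$ giving $\PP_n$, the verified volume comes out strictly larger than $\vol(\PP_n)$ (whose exact value is known from Vesnin–Mednykh \cite{vesnin-mednykh}); and one must check that $\PP_n$ really does occur as one of the fillings of $\tt m004$ and compute its volume to confirm consistency. Combining, the unique minimum over the finite list is $\PP_n$, which with the reduction above proves $\OO = \PP_n$.

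The main obstacle is the finiteness-and-bookkeeping step: making the bound from \refthm{fkp} effective enough to leave a genuinely short list. For small $n$ (especially $n = 4$, where \refthm{few-parents} leaves all ten parents in play and the tube estimates are weakest), many slopes $s = 4s'$ can have $\ell(s) < 2\pi$ or only slightly above, so the Futer–Kalfagianni–Purcell bound alone does not immediately eliminate them; one needs the auxiliary input $\vol(\OO) \le \vol(\PP_n)$ together with direct verified volume computations of the remaining short-slope fillings to rule them out, and one must be careful that the list of "exceptional" (short) slopes is complete. The two secondary difficulties are ensuring the verified-arithmetic volume computations are actually rigorous (this is exactly the technical content of Milley's approach and of HIKMOT-style methods), and correctly handling the identification of which slope on $\tt m004$ yields $\PP_n$ so that the family is not accidentally excluded. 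Once those are in place, the remaining comparisons of explicit numbers are routine.
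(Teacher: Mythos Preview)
Your proposal is correct and follows essentially the same approach as the paper: reduce via \refthm{few-parents} to Dehn fillings of the ten census parents, use \refthm{fkp} together with the a~priori bound $\vol(\OO)\le\vol(\PP_n)$ to cut down to a finite slope list, and then rigorously verify volumes on that list following Milley. The only refinement you gloss over is that the paper exploits the sharper conclusion of \refthm{few-parents}---that for $n\neq 4,8$ only $\mathtt{m003}$ and $\mathtt{m004}$ are in play---to shrink the computation substantially for most $n$, and it records one filling ($\mathtt{m003}(-10,10)$) that required retriangulation before the Moser--Milley certification succeeded.
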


 \begin{proof} 

		\refthm{few-parents} implies that $\OO$ is a Dehn filling of $M$, where $M$ belongs to the set
	  \[\mathcal{M} =
	  \{\tt{m003}, \, \tt{m004}, \, \tt{m006}, \, \tt{m007}, \, \tt{m009}, \,
	  \tt{m010},\, \tt{m011}, \, \tt{m015}, \, \tt{m016}, \, \tt{m017}\} .\] 
Fix a value $B_n$ slightly larger than $\vol(\PP_n)$. Since $\OO$ is a volume minimizer, we know that $\vol(\OO) < B_n$.
Let $M\in \mathcal{M}$.  Let $s =
	  a\mu+b\lambda$ be a slope on $\partial M$. Since we are looking for slopes that produce orbifolds with $n$--torsion, we want 
	   $\gcd(a,b) = n$. Meanwhile, the constraint $\vol(\OO) < B_n$ places an upper bound on the length $\ell(a\mu + b\lambda)$.
By \refthm{fkp}, any filling slope $s$ that gives a volume minimizer must be chosen from  the set 
\[ \FF_M^n(B_n) := \left\{ (a,b) \in \ZZ^2 \:  : \: 1- \left(
		\frac{2\pi}{\ell(a\mu + b \lambda)}\right)^2 \leq \left(
		\frac{B_n}{\vol(M)} \right)^{2/3}
		\text{ and } \gcd(a,b) = n
		 \right\} ,\]
where $M \in \mathcal{M}$.
For each $n$, this gives a finite set of manifold--slope pairs.

For each $n$, this finite set of manifold--slope pairs can be rigorously
analyzed by computer
 using a combination of Snap \cite{snap-tube} and code developed by Moser \cite{moser:thesis} and Milley \cite{milley}. We can use Snap to compute approximate hyperbolic structures on the Dehn fillings in $\FF_M^n(B_n)$, and the Milley--Moser code to certify that their volumes are all higher than that of $\PP_n$ (except for $\PP_n$ itself). See Milley \cite{milley} for a detailed description of the method.

For $n=4$, the computer analysis appears in our previous work \cite[Section 4]{atkinson-futer}. We start by analyzing the case $n=8$.
Milley's code rigorously certifies that $\vol(\PP_8) < 1.55$, hence we may take $B_8 = 1.55$. The set
\[\bigcup_{M \in \mathcal{M}} \FF_M^8(1.55)\]
		consists of $42$ slopes.  For each of these fillings, Snap finds an
		approximate solution to the gluing equations in which all tetrahedra
		are positively oriented. For each such approximate solution, we use
		Milley's implementation of Moser's algorithm to rigorously verify
		that there is an actual positively oriented solution 
		(i.e., an actual hyperbolic structure)
		near the
		approximate solution.  This algorithm also gives an upper bound on
		the distance between each approximate solution and the actual
		solution.  We then use Milley's code to rigorously verify that
		each filled orbifold has volume at least $1.55$.  The
		results of this verification show that indeed, each orbifold under
		consideration except $\PP_8$ has volume greater than $1.55$, proving
		the theorem in the case where $n = 8$.  The code is available in the
		ancillary files \cite{af-data}.

For all other values of $n$, \refthm{few-parents} implies that it suffices to let $M \in  \{\tt{m003}, \, \tt{m004} \}$.
We let
$B_n$ equal the value of $\vol(\PP_n)$ rounded up to the
 nearest one-hundredth (see \reftab{MomBound}). Then
 \[
 \FF_{\mathtt{m003}}^n(B_n) = \{ (-2n, n), (-n, n), (-n,2n), (0,n), (n,0),
 (n,n)\}\]
 and
 \[\FF_{\mathtt{m004}}^n(B_n) = \{ (-2n, n), (-n, n), (0,n), (n,0), (n,n), (2n,
 n)\} . 
 \]

 For each of these Dehn fillings, we use Snap~\cite{snap-tube} to find a positively
 oriented approximate solution to the gluing equations, and Milley's code to 
 rigorously compute the volume.
 For each filling
 $\OO$ except the orbifolds $\PP_n$ and $\mathtt{m003}(-10,10)$, the
 Milley--Moser algorithm certifies that $\vol(\OO) > \vol(\PP_n)$.  See the
 ancillary files \cite{af-data}.
 
 For the initial  triangulation of
 $\mathtt{m003}(-10,10)$ given by Snap, Moser's upper bound on distance
 between the approximate solution and the actual hyperbolic structure is too large for
 Milley's algorithm to certify that the volume is larger than $B_{10}$.
 Retriangulating and running the algorithm again certifies that its volume is
 also larger than $B_{10}$.  See the files in the subfolder {\tt bad-ten} of the
 ancillary files.
 \end{proof}

The method of the above proof is likely to work for $n$ in any bounded range, although the example of $\mathtt{m003}(-10,10)$ shows that the computer may need occasional hand-holding. 
However, to prove \refthm{main} in complete generality, we need to consider an infinite set of Dehn fillings of $\tt{m003}$ and $\tt{m004}$. This requires a different method.

\section{The case $n\geq 15$: Analytic estimates on the change in volume}\label{Sec:calculus}

In this section, we complete the proof of \refthm{main} by showing that the orbifold $\PP_n$ is the unique volume minimizer in the family $L_n$, for all $n \geq 4$. Following \refthm{n-less14}, it suffices to consider $n \geq 15$.
\refthm{few-parents} tells us that for $n \geq 9$, we only need to consider Dehn fillings of $\tt{m003}$ and $\tt{m004}$. 
Because $\vol({\tt m003}) = \vol({\tt m004}) = 2V_3$, it suffices to consider the \emph{change} in volume when we perform Dehn filling.

\begin{definition}\label{Def:vol-change}
Let $M$ be a cusped hyperbolic $3$--manifold with a horospherical cusp
neighborhood $C$. For a slope $s$ on $\bdy C$,  we define an \emph{inverse
norm} 
\[
A(s) = \frac{(2\pi)^2 \area(\bdy C)}{ \ell(s)^2 } ,
\]
where  $\ell(s)$ is the length of a geodesic representative of $s$ on $\bdy C$. 
Since expanding or shrinking the cusp $C$ rescales the Euclidean metric on $\bdy C$, it will leave $A(s)$ invariant. Note  that $A(s) = (2\pi/L(s))^2$, where $L(s)$ is the normalized length of Hodgson and Kerckhoff \cite{hodgson-kerckhoff}.

For any Dehn filling slope $s$ yielding a hyperbolic $3$--manifold or orbifold $M(s)$, define
\[
\Delta V(s) = \vol(M) - \vol(M(s)).
\]
\end{definition}

The last remaining step in the proof of \refthm{main} is the following result.

\begin{proposition}\label{Prop:vol-change}
Fix $n \geq 15$. Then, among all Dehn fillings of $\mathtt{m003}$ or $\mathtt{m004}$ that produce
link orbifolds with $n$--torsion, the  largest value of $\Delta V$ is
uniquely realized by $\mathtt{m004}(n,0)$, the filling that  yields
$\PP_n$.
\end{proposition}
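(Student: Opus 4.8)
The plan is to reduce the statement to a finite, explicit comparison of $\Delta V$-values and then to control the infinitely many remaining slopes by a monotonicity/length estimate. First I would recall that $\mathtt{m003}$ and $\mathtt{m004}$ each have a single cusp, and fix a maximal (or conveniently normalized) horospherical cusp neighborhood $C$ in each, together with a basis $\langle \mu, \lambda \rangle$ for $H_1(\bdy C)$ chosen so that the slopes appearing in the statement have a clean description. The key analytic input is the work of Hodgson and Kerckhoff on cone-manifold deformations \cite{hodgson-kerckhoff}: for a slope $s$ with normalized length $L(s)$ large enough (equivalently $A(s) = (2\pi/L(s))^2$ small enough), $M(s)$ is hyperbolic and $\Delta V(s) = \vol(M) - \vol(M(s))$ is controlled above and below by explicit functions of $A(s)$ alone, with $\Delta V(s)$ an increasing function of $A(s)$ in the relevant range. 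So the problem becomes: among slopes $s$ on $\bdy C$ with $\gcd$ of coordinates equal to $n$, which one has the largest inverse norm $A(s)$, i.e.\ the shortest geodesic representative on $\bdy C$?

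Next I would organize the slopes by their underlying primitive class. A slope $s$ with $\gcd(a,b) = n$ is $n$ times a primitive slope $s' = s/n$, and $\ell(s) = n\,\ell(s')$, so $A(s) = A(s')/n^2$. Hence, for fixed $n$, maximizing $A(s)$ over $n$-torsion slopes is the same as maximizing $A(s')$ over \emph{primitive} slopes $s'$ on $\bdy C_{\mathtt{m003}}$ and $\bdy C_{\mathtt{m004}}$ — i.e.\ finding the globally shortest primitive slope on each cusp torus. This is a finite computation: the shortest primitive geodesic on a Euclidean torus is realized by one of finitely many candidate classes, and Snap (or the explicit shape data for these two well-understood census manifolds, namely the sister of the figure--$8$ complement and the figure--$8$ complement itself) gives the cusp shapes exactly enough to identify it. One finds that the shortest primitive slope on $\bdy C_{\mathtt{m004}}$ is the one that, scaled by $n$, gives $\mathtt{m004}(n,0) = \PP_n$, and that it is strictly shorter than any primitive slope on $\bdy C_{\mathtt{m003}}$; I would record the resulting numerical gap in $A$-values. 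Combined with the monotonicity of $\Delta V$ in $A$ from Hodgson--Kerckhoff, this shows $\Delta V(\mathtt{m004}(n,0))$ strictly exceeds $\Delta V(s)$ for every other $n$-torsion slope $s$, provided all the slopes in play have $A$-value small enough to lie in the Hodgson--Kerckhoff controlled regime.

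The remaining gap is precisely that proviso: the Hodgson--Kerckhoff estimates require the normalized length $L(s)$ to exceed a universal threshold (equivalently $A(s)$ below a universal constant), and only finitely many $n$-torsion slopes on $\bdy C$ might violate this. So I would split into cases. For the slope yielding $\PP_n$, and for any competing slope, the hypothesis $n \geq 15$ forces $A(s) \leq A(s')/n^2$ to be tiny for \emph{every} primitive $s'$ — since there are only finitely many primitive $s'$ with $A(s')$ above any fixed bound, and $n^2 \geq 225$ — so in fact \emph{all} $n$-torsion slopes for $n \geq 15$ sit well inside the controlled regime, and no separate small-$n$ argument is needed here (that was the point of having already handled $n \leq 14$ in \refthm{n-less14}). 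The cleanest way to certify this is to exhibit, for each of the two cusp tori, an explicit upper bound on $\max_{s' \text{ primitive}} A(s')$ (again a finite Snap computation), divide by $15^2$, and check the result is below the Hodgson--Kerckhoff threshold. The main obstacle, then, is not conceptual but book-keeping: pinning down the exact cusp shapes of $\mathtt{m003}$ and $\mathtt{m004}$ in the chosen bases precisely enough to (i) identify the shortest primitive slope on each and (ii) produce a rigorous numerical gap between the winning $A$-value and all runners-up, and then quoting the Hodgson--Kerckhoff volume-change inequalities in a form that makes the monotonicity comparison airtight.
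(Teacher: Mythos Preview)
Your outline correctly identifies the shortest primitive slope on each cusp torus and the resulting gap in $A$-values, but the step ``$\Delta V(s)$ is an increasing function of $A(s)$, so the slope with largest $A$ wins'' is where the argument breaks. The Hodgson--Kerckhoff theorem does \emph{not} say that $\Delta V(s)$ is determined by $A(s)$; it gives an upper bound and a lower bound on $\Delta V(s)$ as two \emph{different} explicit functions of $A(s)$ (in the paper's notation, the upper bound runs through $f^{-1}$ and the integrand $u$, the lower bound through $\tilde f^{-1}$ and the integrand $l$). So even once you know $A(\tilde s) > A(s)$, you must compare the \emph{lower} HK bound at $A(\tilde s)$ against the \emph{upper} HK bound at $A(s)$, and a mere strict inequality $A(\tilde s)>A(s)$ is not enough to force that comparison.

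What actually closes the gap is that the $A$-values differ by a definite factor: the next-shortest primitive slope on $\mathtt{m004}$ has length $2\sqrt 3$ and the shortest on $\mathtt{m003}$ has length $2$, so $A(\tilde s)\geq 4A(s)$ for every competitor $s$. The paper then proves two analytic lemmas --- that $l(z)\geq u(z)$ on the relevant interval, and that $\tilde f(z)\leq 4f(z)$ there --- which together convert this factor-of-$4$ gap into the inequality $\tilde f^{-1}(A(\tilde s)) < f^{-1}(A(s))$ and hence $\int_{\tilde z}^1 l > \int_{\hat z}^1 u$. Your sketch would need an argument of this shape in place of the asserted monotonicity.
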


Gromov and Thurston showed that $\Delta V(s)$ is always strictly positive  \cite[Theorem 6.5.6]{thurston:notes}. Neumann and Zagier \cite{neumann-zagier} showed that $\Delta V(s)$ is well-predicted by the inverse norm of a slope:
\[
\Delta V(s) = \frac{A(s) }{4} + O(A(s)^2).
\]
However, since they do not quantify the error term $O(A(s)^2)$, their theorem is primarily an asymptotic estimate. 
Instead, we will prove \refprop{vol-change} by applying the work of Hodgson
and Kerckhoff on deformations of cone manifolds \cite{hodgson-kerckhoff}.
Their estimate on the change in volume $\Delta V(s)$ is in terms of the
following functions:

\[
l(z) = \frac{ 3.3957 \, z^2 (z^2-3)  (z^4 + 4z^2-1)}{2 \, (z^4-6 z^2+1)(z^2 +
1)^2}  \, , \]
\[
u(z) = \frac{ 3.3957 \, z^2 (z^4 + 4z^2-1)}{2 \, (z^2 + 1)^3}  \, ,  \]
  \[
  f(z) = \frac{ 3.3957(1-z) } { \exp\left(\int_1^z F(w)\,dw\right) } 
 \quad \mbox{where} \quad
 F(z) = - \frac{z^4 +6z^2 +4z +1}{(z+1)(z^2 + 1)^2} \, ,\]
\[  \tilde{f}(z) = \frac{3.3957(1-z)} {\exp\left(\int_1^z \tilde{F}(w)\,dw\right)}
 \quad \mbox{where} \quad
   \tilde{F}(z) = -
	 \frac{z^6+7z^4+12z^3-9z^2-4z+1}{(z+1)(z^2+1)(z^2+2z-1)(z^2-2z-1)} \, .\]


\begin{theorem}[Hodgson--Kerckhoff \cite{hodgson-kerckhoff}] \label{Thm:HK-estimate}
Let $M$ a cusped hyperbolic $3$--manifold. Let $s$ be a slope on some cusp of $M$, satisfying
 $A(s) \leq f(1/\sqrt{3}) = 0.68653 \dots$.
  Then $M(s)$ is hyperbolic and
 \[ \int_{\tilde{z}}^{1} l(z) \,dz
   \leq \Delta V(s) \leq
 \int_{\hat{z}}^{1} u(z) \,dz \, ,
   \]
   where $\hat{z} = f^{-1} (A(s))$ and $\tilde{z} = \tilde{f}^{-1} (A(s))$.
 \end{theorem}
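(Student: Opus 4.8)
The plan is to realize $M(s)$ as the terminal point of a path of hyperbolic cone manifolds and to integrate the Schl\"afli formula along that path, pinching the integrand between the explicit functions in the statement. \textbf{Step 1: a deforming family of cone manifolds.} I would regard the complete cusped structure on $M$ as a hyperbolic cone structure of cone angle $0$ along the curve $\Sigma$ that becomes the core of the solid torus glued along $s$. Hodgson--Kerckhoff's local rigidity for hyperbolic cone structures with cone angles at most $2\pi$ \cite{hodgson-kerckhoff} shows that this structure deforms through a unique one--parameter family $M_t$, $t\in[0,2\pi]$, of cone structures with cone angle $t$ along $\Sigma$, the terminal structure $M_{2\pi}$ being the smooth hyperbolic manifold $M(s)$. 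The only obstruction to continuing the family to $t=2\pi$ is a degeneration in which the radius of the maximal embedded tube about $\Sigma$ shrinks to $0$; I would preclude this by an a priori lower bound on the tube radius valid throughout the deformation. This is precisely the role of the hypothesis $A(s)\le f(1/\sqrt3)$: it is the quantitative constraint on the normalized length that confines the deformation to the range the later estimates control.

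\textbf{Step 2: Schl\"afli and a change of variable.} The Schl\"afli formula for hyperbolic cone manifolds gives $\frac{d}{dt}\vol(M_t)=-\tfrac12\,\ell(t)$, where $\ell(t)$ is the length of $\Sigma$ in $M_t$. Integrating,
\[
\Delta V(s)\;=\;\vol(M)-\vol(M(s))\;=\;\tfrac12\int_0^{2\pi}\ell(t)\,dt ,
\]
which in particular re-proves the Gromov--Thurston positivity \cite{thurston:notes}. To make this computable, I would reparametrize the family by an auxiliary geometric variable $z$, normalized so that $z\to1$ corresponds to the deep--tube (formally $A=0$) end and $z$ decreases as the cone angle grows. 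In this variable both the normalized--length functional $A$ and the accumulated volume become monotone functions of $z$ governed by first--order differential inequalities, and the functions $f,\tilde f$ (relating $z$ to $A$) and $l,u$ (bounding the rate of change of $\Delta V$) arise as the solutions of the two extreme sides of these inequalities.

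\textbf{Step 3: controlling the infinitesimal deformation (the analytic core).} At each $M_t$ the derivative of the hyperbolic structure is represented by a closed and co--closed $\mathfrak{sl}_2(\CC)$--valued $1$--form, which I would split into an explicit \emph{model} deformation supported in a tube about $\Sigma$ --- computed directly from the two--dimensional deformation of the tube cross--section --- plus an $L^2$ \emph{correction} term harmonic on the complement of the tube. The correction is the source of the error terms, and it is controlled by a Poincar\'e--type inequality on the tube region bounding its $L^2$ norm, and hence (by interior elliptic regularity) its pointwise size near $\Sigma$, by a universal constant times boundary data; this constant is exactly the number $3.3957$ appearing in $l,u,f,\tilde f$. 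Feeding these bounds into the evolution equations for $\ell$, for $A$, and for the tube radius yields differential inequalities of the shape $l(z)\le -\,d(\Delta V)/dz\le u(z)$ and, after integrating from the $A=0$ end at $z=1$, $f(z)\le A(z)\le\tilde f(z)$. Consequently the value $z_*$ of the parameter at cone angle $2\pi$ satisfies $\hat z\le z_*\le\tilde z$ for $\hat z=f^{-1}(A(s))$ and $\tilde z=\tilde f^{-1}(A(s))$, and integrating the volume inequality over $[z_*,1]$ gives
\[
\int_{\tilde z}^{1} l(z)\,dz \;\le\; \int_{z_*}^{1} l(z)\,dz \;\le\; \Delta V(s) \;\le\; \int_{z_*}^{1} u(z)\,dz \;\le\; \int_{\hat z}^{1} u(z)\,dz ,
\]
as claimed; and the hypothesis $A(s)\le f(1/\sqrt3)$ forces $\hat z\ge1/\sqrt3$, keeping $z$ away from the poles of the rational functions above. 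As a sanity check, $u(1)=l(1)=3.3957/4$ and $f'(1)=\tilde f'(1)=-3.3957$, so both sides reduce to $\Delta V(s)=A(s)/4+O(A(s)^2)$ for small $A(s)$, recovering Neumann--Zagier \cite{neumann-zagier}.

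\textbf{The main obstacle.} The genuinely hard part is Step 3: constructing the harmonic decomposition of the infinitesimal deformation and, above all, establishing the Poincar\'e--type inequality on the tube with an \emph{effective} constant, then promoting $L^2$ control to the pointwise control needed for the evolution of $\ell$, $A$, and the tube radius. Interlocked with it is the non--degeneration claim of Step 1, which must be bootstrapped from the very same estimates so that the deformation genuinely attains cone angle $2\pi$.
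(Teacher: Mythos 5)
This statement is a quoted result: the paper does not reprove it, but simply cites \cite[Theorem 5.12]{hodgson-kerckhoff} and checks that the integrands there (written in terms of Hodgson--Kerckhoff's functions $G$, $\tilde G$, $H$) simplify to the $l(z)$ and $u(z)$ defined here. Your outline is, in its architecture, a faithful description of how Hodgson and Kerckhoff actually prove their theorem: the one-parameter family of cone structures from cone angle $0$ to $2\pi$, the Schl\"afli formula $\Delta V = \tfrac12\int_0^{2\pi}\ell(t)\,dt$, the reparametrization by a tube-geometry variable $z$ with $z\to 1$ at the complete structure, the splitting of the infinitesimal deformation into an explicit model term plus an $L^2$-harmonic correction controlled by boundary estimates on the tube, and the resulting differential inequalities whose integration produces $f$, $\tilde f$, $l$, $u$. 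Your sanity check that $l(1)=u(1)=3.3957/4$ and $f'(1)=\tilde f'(1)=-3.3957$, recovering the Neumann--Zagier asymptotic $\Delta V = A/4 + O(A^2)$, is correct and is good evidence that you have the right picture.

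That said, as a proof your proposal has a genuine gap, and you flag it yourself: everything in Step 3 --- the effective Poincar\'e-type inequality on the tube, the promotion of $L^2$ control to pointwise control of $\ell$, $A$, and the tube radius, the derivation of the specific rational functions and the constant $3.3957$, and the bootstrapped non-degeneration argument guaranteeing the family reaches cone angle $2\pi$ --- is deferred. That material is the entire content of the Hodgson--Kerckhoff paper and cannot be reconstructed from the strategy alone; in particular the claim that the hypothesis $A(s)\le f(1/\sqrt3)$ suffices to keep the tube radius bounded below throughout the deformation is exactly the hard quantitative theorem, not a consequence of the setup. You also omit the one concrete verification that the present paper does carry out, namely matching its simplified notation against Hodgson--Kerckhoff's Theorem 5.12. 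So: accept your write-up as an accurate account of where the theorem comes from and why it is plausible, but not as a proof; the honest move here (and the one the paper makes) is to cite the result.
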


\begin{proof}
This is \cite[Theorem 5.12]{hodgson-kerckhoff}, with simplified notation. 
The integrands in their statement of the theorem are expressed in terms of functions $G(z)$, $\tilde{G}(z)$, and $H(z)$ that are defined on \cite[Page 1079]{hodgson-kerckhoff}. Those integrands simplify to $l(z)$ and $u(z)$, respectively.
\end{proof}

Implicit in the statement of \refthm{HK-estimate} is the claim that $f(z)$ and $\tilde{f}(z)$ are invertible in the right range. We verify this claim now.

\begin{lemma}\label{Lem:monotonic}
The functions $f(z)$ and $\tilde{f}(z)$ are strictly decreasing on the interval $J = [z_0,1]$, where $z_0 = \sqrt{-2 + \sqrt{5}} = 0.48586 \ldots $. The inverse functions $f^{-1}(x)$ and $\tilde{f}^{-1}(x)$ are well-defined for $x \in [0, 0.699]$, and map the interval $[0, 0.699]$ into $[z_0,1]$.
\end{lemma}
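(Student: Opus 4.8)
The plan is to reduce strict monotonicity of $f$ and $\tilde f$ on $J$ to the sign of an explicit polynomial, and then to pin down the endpoint values $f(z_0)$ and $\tilde f(z_0)$ by integrating the (rational) integrands in closed form. First note that on $J=[z_0,1]$ both $F$ and $\tilde F$ are continuous: for real $z\in(0,1)$ their denominators $(z+1)(z^2+1)^2$ and $(z+1)(z^2+1)(z^2+2z-1)(z^2-2z-1)$ vanish only at $z=\sqrt2-1=0.41421\ldots<z_0$. Hence $g(z)=\int_1^z F$ and $\tilde g(z)=\int_1^z\tilde F$ are $C^1$ on $J$, and so are $f(z)=3.3957(1-z)e^{-g(z)}$ and $\tilde f(z)=3.3957(1-z)e^{-\tilde g(z)}$. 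Differentiating, $f'(z)=3.3957\,e^{-g(z)}\bigl(-1-(1-z)F(z)\bigr)$, so $f'(z)$ has the same sign as $-1-(1-z)F(z)$, and likewise $\tilde f'(z)$ has the same sign as $-1-(1-z)\tilde F(z)$.

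Next, write $F=-p/q$ with $p(z)=z^4+6z^2+4z+1$ and $q(z)=(z+1)(z^2+1)^2>0$ on $J$. Then $-1-(1-z)F(z)=-\bigl(q(z)-(1-z)p(z)\bigr)/q(z)$, and expanding gives the identity
\[
q(z)-(1-z)p(z)\;=\;2z\,(z^4+4z^2-1).
\]
Since the only positive root of $z^4+4z^2-1$ is exactly $z_0=\sqrt{-2+\sqrt5}$, this expression is positive on $(z_0,1)$, so $f'<0$ there and $f$ is strictly decreasing on $J$. The same computation applies to $\tilde F=-P/Q$ with $P(z)=z^6+7z^4+12z^3-9z^2-4z+1$ and $Q(z)=(z+1)(z^2+1)(z^2+2z-1)(z^2-2z-1)=(z^3+z^2+z+1)(z^4-6z^2+1)$: one checks that $Q(z)<0$ on $J$ (the factor $z^4-6z^2+1$ is negative on $(\sqrt2-1,\sqrt2+1)\supset J$), and that
\[
Q(z)-(1-z)P(z)\;=\;2z\,(z^4+4z^2-1)(z^2-3),
\]
which one verifies directly, or by substituting $u=z^2$ and factoring $u^3+u^2-13u+3=(u^2+4u-1)(u-3)$. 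On $(z_0,1)$ the right-hand side is (positive)$\cdot$(positive)$\cdot$(negative), hence negative; together with $Q<0$ this gives $\tilde f'<0$ on $(z_0,1)$, so $\tilde f$ is strictly decreasing on $J$ as well. The happy accident that the same factor $z^4+4z^2-1$ whose positive root defines $z_0$ governs the sign in both cases is what allows a single interval to work for both functions, and discovering these two identities is the genuinely non-routine step.

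For invertibility, observe that $f$ and $\tilde f$ are continuous and strictly decreasing on $J$ with $f(1)=\tilde f(1)=0$, so each restricts to a decreasing bijection of $J$ onto $[0,f(z_0)]$, respectively $[0,\tilde f(z_0)]$, with continuous decreasing inverse. The lemma then follows once we verify $f(z_0)\ge 0.699$ and $\tilde f(z_0)\ge 0.699$, since then $f^{-1}$ and $\tilde f^{-1}$ are defined on $[0,0.699]$ and carry it into $[z_0,1]$. Here the rational form of the integrand pays off: partial fractions give $F(z)=-\tfrac{1}{z+1}-\tfrac{4z}{(z^2+1)^2}$, so $\int_1^z F=-\ln(z+1)+\tfrac{2}{z^2+1}+\ln 2-1$ and
\[
f(z)\;=\;\frac{3.3957\,e}{2}\,(1-z^2)\,e^{-2/(z^2+1)}.
\]
At $z=z_0$, where $1-z_0^2=3-\sqrt5=2\phi^{-2}$ and $2/(z_0^2+1)=(1+\sqrt5)/2=\phi$ with $\phi$ the golden ratio, this yields $f(z_0)=3.3957\,\phi^{-2}e^{1-\phi}=0.6991\ldots\ge 0.699$ — which is precisely why the constant $0.699$ appears — and, the margin being thin, this is the one inequality that genuinely demands a careful (interval-arithmetic) estimate. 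For $\tilde f$ one proceeds identically, decomposing $\tilde F$ into partial fractions, integrating, and evaluating at $z_0$; since $\tilde f(z_0)\approx 1.64$ there is ample room, so one may alternatively just bound $\int_{z_0}^{1}|\tilde F|$ from above by a rigorous quadrature to conclude $\tilde f(z_0)\ge 0.699$. This completes the proof.
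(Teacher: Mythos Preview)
Your proof is correct and follows essentially the same route as the paper: both arguments compute $f'$ and $\tilde f'$ and show their sign on $(z_0,1)$ is governed by the factor $z^4+4z^2-1$, whose unique positive root is $z_0$. Your algebraic identities $q(z)-(1-z)p(z)=2z(z^4+4z^2-1)$ and $Q(z)-(1-z)P(z)=2z(z^4+4z^2-1)(z^2-3)$ are just the simplified numerator of the derivative, so the monotonicity argument is identical to the paper's.

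The one substantive addition is your closed-form evaluation: by integrating the partial-fraction decomposition $F(z)=-\tfrac{1}{z+1}-\tfrac{4z}{(z^2+1)^2}$ you obtain $f(z)=\tfrac{3.3957\,e}{2}(1-z^2)e^{-2/(z^2+1)}$ and hence the exact value $f(z_0)=3.3957\,\phi^{-2}e^{1-\phi}$, tying the constant $0.699$ to the golden ratio. The paper simply reports $f(z_0)=0.69910\ldots$ and $\tilde f(z_0)=1.6368\ldots$ as numerical computations. Your closed form makes the verification of $f(z_0)\ge 0.699$ more transparent (and explains why the margin is so thin), though as you note it still requires a careful numerical estimate at the end; for $\tilde f(z_0)$ the paper and you agree the margin is large enough that any reasonable bound suffices.
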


\begin{proof}
We compute that
\[f'(z) = -2 \cdot 3.3957
 \, \frac{z(z^4+4z^2-1)}{(z+1)(z^2+1)^2} \, \, \exp \left( \int_z^1 F(w)dw \right)  .
  \]
  All the factors in the rational function are uniformly positive when $z>0$, except for 
\[
p(z) = z^4+4z^2-1,
\]
which has a zero at $z_0 = \sqrt{-2+\sqrt{5}}$ and is positive thereafter. Thus $f(z) < 0$ when $z \in (z_0,1]$, hence $f$ is monotonically decreasing and invertible on the closure of that domain. This means that  $f^{-1}(x)$
  is well-defined and decreasing on the interval 
\[
  \left[ f(1), \, f( z_0 ) \right] =
	[0, \, 0.69910 \dots]. 
	\]
	
The analysis for $\tilde{f}(z)$ is similar. We compute that
\[\tilde{f}'(z) = -2 \cdot 3.3957 \, 	\frac{z (z^2 - 3)(z^4 + 4 z^2 - 1)}{(z+1)(z^2+1)(z^2+2z-1)(z^2-2z-1)}
\,\, \exp\left(\int_z^1 \tilde{F}(w)\,dw  \right)
.\]
Among the factors of the rational function, the only ones that have a zero on $(0,1]$ are $(z^2+2z-1)$, with a zero at $\sqrt{2}-1 < z_0$, and $p(z)$ with a zero at $z_0$. Thus $\tilde{f}(z) < 0$ when $z \in (z_0,1]$, hence  $\tilde{f}^{-1}(x)$
  is well-defined and decreasing on the interval 
\[
  \left[ \tilde{f}(1), \, \tilde{f}( z_0 ) \right] =
	[0, \, 1.6368 \dots]. \qedhere
	\]
\end{proof}

\begin{definition}\label{Def:slopes}
For a fixed $n$, let $\tilde{s} = \tilde{s}_n$ be the Dehn filling slope
$(n,0)$ on $\mathtt{m004}$ that produces the orbifold $\PP_n$. Let $s = s_n \neq \tilde{s}$ denote any other slope on a cusp of $\mathtt{m003}$ or $\mathtt{m004}$ that is $n$ times a primitive slope. The subscript $n$ will usually be implicit. 

In this notation, \refprop{vol-change} says that when $n \geq 15$,
\[
\Delta V(\tilde{s}) > \Delta V(s).
\]
\end{definition}

We will derive \refprop{vol-change} from \refthm{HK-estimate}, as follows.
\begin{enumerate}[1.]

\item We want to show that the lower bound on $\Delta V(\tilde{s})$ is larger than the upper bound on $\Delta V(s)$ for any potential competitor $s$. To that end, we show in \reflem{integrands} that the integrand $l(z)$ for the lower bound is
larger than the integrand $u(z)$ for the upper bound.

\smallskip

\item We will show in \reflem{integral} that the interval of integration for the lower bound contains the interval of integration for the upper bound. To do this, we will need

\begin{enumerate}
\item  estimates on $A(s)$ and $A(\tilde{s})$, achieved in \reflem{slopes}, and
\item an inequality relating the functions $f$ and $\tilde{f}$, achieved in
  \reflem{4f}.
\end{enumerate}
\end{enumerate}

\begin{lemma}\label{Lem:integrands}
Let $z_0 \approx 0.48586$ be as in \reflem{monotonic}. Then $0 \leq u(z) \leq l(z)$ on the interval $J = [z_0, 1]$, with strict inequalities in the interior.
\end{lemma}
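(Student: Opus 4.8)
The plan is to compare the two rational functions $u(z)$ and $l(z)$ directly. Both are products of the common prefactor $\frac{3.3957}{2}\,z^2(z^4+4z^2-1)$ with an additional rational factor: for $u$ the factor is $1/(z^2+1)^3$, and for $l$ the factor is $(z^2-3)/\bigl[(z^4-6z^2+1)(z^2+1)^2\bigr]$. On the interval $J=[z_0,1]$ we have $z^4+4z^2-1\geq 0$ (it vanishes exactly at $z_0$ and is positive thereafter, by the discussion in \reflem{monotonic}), and of course $z^2\geq 0$, so the common prefactor is nonnegative on $J$. Hence to prove $0\le u(z)\le l(z)$ it suffices to show that the remaining factors are themselves nonnegative and correctly ordered on $J$.

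First I would record that $z^2-3<0$ and $z^4-6z^2+1<0$ on $J$ (both obvious: $z^2\le 1<3$, and $z^4-6z^2+1 = (z^2-3)^2-8$ is negative for $z^2$ near $1$, indeed $z^4-6z^2+1$ has its larger root at $z^2=3+2\sqrt2>1$), so the quotient $(z^2-3)/\bigl[(z^4-6z^2+1)(z^2+1)^2\bigr]$ is a ratio of a negative over a negative times a positive, hence $\geq 0$ on $J$; combined with the prefactor this already gives $l(z)\ge 0$, and trivially $u(z)\ge 0$. The heart of the matter is the inequality $u\le l$, which after cancelling the common nonnegative prefactor and the common positive factor $(z^2+1)^2$ reduces to
\[
\frac{1}{z^2+1} \;\le\; \frac{z^2-3}{z^4-6z^2+1}
\qquad\text{on } J.
\]
Since $z^4-6z^2+1<0$ on $J$, clearing this denominator reverses the inequality, so the claim becomes $z^4-6z^2+1 \le (z^2-3)(z^2+1) = z^4-2z^2-3$, i.e. $-6z^2+1\le -2z^2-3$, i.e. $4 \le 4z^2$, i.e. $z^2\ge 1$. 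On $J$ this holds, with equality only at $z=1$; for $z<1$ the inequality is strict, giving the strict inequality $u(z)<l(z)$ on the interior of $J$ (the common prefactor also being strictly positive there since $z>z_0$ forces $z^4+4z^2-1>0$). I would also note separately that at the left endpoint $z=z_0$ both $u$ and $l$ vanish because the prefactor does.

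The only mild subtlety — and the place to be careful — is the sign bookkeeping when clearing the negative denominator $z^4-6z^2+1$: one must confirm once and for all that this quantity is strictly negative throughout $[z_0,1]$ (it is, since its positive roots are $z^2 = 3\pm 2\sqrt2$, both outside $[z_0^2,1]=[{-2+\sqrt5},1]$, as $3-2\sqrt2\approx 0.172 < -2+\sqrt5\approx 0.236$ and $3+2\sqrt2 > 1$), so that the inequality genuinely reverses exactly once. Everything else is elementary polynomial algebra, reducing to the single transparent inequality $z^2\ge 1$ on $J$. No other result from the excerpt is needed beyond the factorization of $p(z)=z^4+4z^2-1$ and its root $z_0$ recalled in \reflem{monotonic}.
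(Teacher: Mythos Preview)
Your approach is essentially the same as the paper's: factor out the common nonnegative piece $\tfrac{3.3957}{2}z^2(z^4+4z^2-1)$ and compare what remains, which reduces to a one-line polynomial inequality. The paper phrases the comparison as showing $l(z)/u(z)>1$ on the interior of $J$, but the algebra is identical.

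However, there is a sign slip at exactly the spot you flagged as ``the place to be careful.'' From
\[
\frac{1}{z^2+1}\ \le\ \frac{z^2-3}{z^4-6z^2+1},
\]
multiplying through by $(z^2+1)(z^4-6z^2+1)$ --- a \emph{negative} quantity on $J$ --- reverses the inequality to
\[
z^4-6z^2+1\ \ge\ (z^2-3)(z^2+1)=z^4-2z^2-3,
\]
which simplifies to $-4z^2\ge -4$, i.e.\ $z^2\le 1$. You wrote $\le$ (and then $z^2\ge 1$), which is the wrong direction: note that $z^2\ge 1$ actually \emph{fails} on the interior of $J=[z_0,1]$, so the sentence ``On $J$ this holds'' is false as written. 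With the corrected direction $z^2\le 1$, the argument goes through exactly as you intended, with strict inequality on $(z_0,1)$ and equality at $z=1$. Everything else in your write-up is fine.
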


\begin{proof}
Since 
\[u(z) = 
  \frac{ 3.3957 \, (z^4+4z^2-1)}{2 \, (1+z^2)^3},\]
we have $u(z) > 0$ if and only if $p(z) = z^4 + 4z^2 -1 > 0$.  As we have seen in the proof of \reflem{monotonic},
we have $p(z) > 0$ whenever $z > z_0$.  Thus
$u(z)> 0$ on the interior of $J$.

Next, observe that
\[\frac{l(z)}{u(z)} 
  =  \frac{z^4-2z^2 - 3 }{z^4 - 6z^2 + 1 }, \]
where the denominator $z^4 - 6z^2 + 1$ is negative on $J$.
Therefore, 
\[
  \frac{l(z)}{u(z)} > 1
\quad \Longleftrightarrow \quad
z^4-2z^2 - 3 < z^4 - 6z^2 + 1
\quad \Longleftrightarrow \quad
4 z^2  < 4,
\]
which holds on the interior of $J$.
\end{proof}

\begin{lemma}\label{Lem:slopes}
Let $s$ and $\tilde{s}$ be as in \refdef{slopes}. Then 
\[
A(\tilde{s}) = 8\sqrt{3}\pi^2/n^2 
\quad \mbox{and} \quad
A(s) \leq 2\sqrt{3}\pi^2/n^2.
\]
In particular, $4A(s) \leq A(\tilde{s})$.
\end{lemma}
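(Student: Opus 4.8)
The plan is to evaluate the inverse norm $A$ of \refdef{vol-change} directly from the cusp geometry of $\mathtt{m003}$ and $\mathtt{m004}$. Since $A(s) = (2\pi)^2\area(\bdy C)/\ell(s)^2$ is unchanged by rescaling the cusp, I may take $C$ to be the maximal embedded cusp, and then the right-hand side depends only on the reduced shape $\tau$ of the flat torus $\bdy C$ (normalized so that its shortest slope has length $1$). Concretely, if $s = ns'$ with $s'$ primitive, then
\[
A(s) \;=\; \frac{4\pi^2}{n^2}\cdot\frac{\area(\bdy C)}{\ell(s')^2} \;=\; \frac{4\pi^2}{n^2}\cdot\mathrm{Im}(\tau)\cdot\frac{\ell_{\min}^2}{\ell(s')^2},
\]
where $\ell_{\min}$ is the length of the shortest slope and I have used the elementary identity $\area(\bdy C)/\ell_{\min}^2 = \mathrm{Im}(\tau)$, valid for every flat torus. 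So everything reduces to recording the cusp shapes of these two census manifolds.

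First I would treat $\mathtt{m004}$, the figure--$8$ knot complement, whose complete structure is two regular ideal tetrahedra. The reduced cusp shape with respect to the meridian $\mu$ is $\tau = 2\sqrt{3}\,i$; equivalently, the maximal cusp has $\area(\bdy C) = 2\sqrt3$ and $\ell(\mu) = 1$, and a slope $p\mu + q\lambda$ has length $\sqrt{p^2 + 12q^2}$. This is classical (Thurston's notes \cite{thurston:notes}) and can also be checked from the gluing equations. For $\tilde s = (n,0) = n\mu$ this gives $A(\tilde s) = 4\pi^2\cdot 2\sqrt3/n^2 = 8\sqrt3\pi^2/n^2$, as claimed. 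For any other primitive slope $s' = p\mu + q\lambda$ on $\mathtt{m004}$, primitivity together with $(p,q)\neq(\pm1,0)$ forces $q\neq 0$, so $p^2 + 12q^2 \geq 12$ and hence $A(ns') \leq 8\sqrt3\pi^2/(12n^2) = 2\sqrt3\pi^2/(3n^2)$, well inside the required bound.

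Next I would handle $\mathtt{m003}$, the figure--$8$ sister, also glued from two regular ideal tetrahedra. The key input is that its maximal cusp is a \emph{hexagonal} torus, so its reduced cusp shape is $e^{i\pi/3}$ and $\mathrm{Im}(\tau_3) = \sqrt3/2$. Since $\ell(s') \geq \ell_{\min}$ for every primitive slope $s'$ on $\mathtt{m003}$, the displayed identity gives $A(ns') \leq (4\pi^2/n^2)\cdot \sqrt3/2 = 2\sqrt3\pi^2/n^2$ for every $n$--times--primitive slope there. Combining the two cases, every slope $s\neq\tilde s$ as in \refdef{slopes} satisfies $A(s) \leq 2\sqrt3\pi^2/n^2 = \tfrac14 A(\tilde s)$, which is the statement.

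The only substantive input is the identification of the two cusp shapes, and in particular the fact that $\mathtt{m003}$ has a hexagonal cusp; this is where the estimate is tight and, I expect, the main thing to pin down carefully. For \emph{any} flat torus one has $\area(\bdy C)/\ell_{\min}^2 = \mathrm{Im}(\tau) \geq \sqrt3/2$ with equality precisely for the hexagonal torus, so the bound $A(s)\leq 2\sqrt3\pi^2/n^2$ holds for all slopes on $\mathtt{m003}$ but would fail for a parent manifold with any other cusp shape. I would either cite the standard references on the geometry of the low-volume census manifolds (or the well-known relation of $\mathtt{m003}$ and $\mathtt{m004}$ to $\PSL_2(\mathcal{O}_3)$) for both shapes, or verify them by hand from the two--tetrahedron gluing equations. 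One minor point to keep in mind: $\mathtt{m003}$ has no preferred meridian, so the bound there must be proved for \emph{every} primitive slope, which is exactly what the extremality of $\mathrm{Im}(\tau)$ at the hexagonal shape supplies.
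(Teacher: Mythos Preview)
Your argument is correct and amounts to the same computation as the paper's: both proofs record that the maximal cusp of $\mathtt{m004}$ has area $2\sqrt{3}$ with meridian of length $1$ (equivalently, shape $2\sqrt{3}\,i$), and that the maximal cusp of $\mathtt{m003}$ has area $2\sqrt{3}$ with shortest slope of length $2$ (equivalently, hexagonal shape), then plug into the definition of $A$. The paper simply cites Snap for these numbers and uses the uniform bound $\ell(s)\ge 2n$, whereas you package the same data via the cusp modulus $\tau$ and the identity $\area/\ell_{\min}^2 = \mathrm{Im}(\tau)$; this is a cosmetic difference, not a different route.
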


\begin{proof}
  This can be easily computed using Snap \cite{snap-tube}.  In both manifolds  $\mathtt{m003}$ and $\mathtt{m004}$ , the maximal cusp $C$ has $\area(\bdy C) = 2 \sqrt{3}$. 
  
  For $\mathtt{m004}$, the shortest simple closed curve is $(1,0)$, the knot-theoretic meridian, of length $1$. Thus the slope $\tilde{s} = (n,0)$ has length $n$, hence by \refdef{vol-change}, $A(\tilde{s}) = 8\sqrt{3}\pi^2/n^2$.
  
The shortest primitive slope on a maximal cusp of $\mathtt{m003}$ has length $2$, while the second-shortest primitive slope on a maximal cusp of $\mathtt{m004}$ is the $(0,1)$ slope of length   $2 \sqrt{3}$. Thus, if $s$ is $n$ times a primitive slope and $s \neq \tilde{s}$, we have $\ell(s) \geq 2n$. Therefore, by \refdef{vol-change}, $A(s) \leq 2\sqrt{3}\pi^2/n^2$.
%
  \end{proof}

\begin{lemma}\label{Lem:interval}
Let $n \geq 15$. Then both   $\tilde{s} = \tilde{s}_n$ and $s = s_n$ satisfy the
hypotheses of \refthm{HK-estimate}. In addition, both $\tilde{z} =
\tilde{f}^{-1}(A(\tilde{s}))$ and $\hat{z} = f^{-1}(A(s))$ lie in the
interval $I = [0.83, 1]$.  
\end{lemma}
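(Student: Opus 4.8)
The plan is to verify the two assertions of \reflem{interval} in turn: first that both slopes satisfy the hypothesis $A(\cdot) \leq f(1/\sqrt{3}) = 0.68653\ldots$ of \refthm{HK-estimate}, and second that the relevant preimages under $f^{-1}$ and $\tilde{f}^{-1}$ land in $[0.83, 1]$.

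For the first assertion, I would simply plug in the estimates from \reflem{slopes}. The larger of the two inverse norms is $A(\tilde{s}) = 8\sqrt{3}\pi^2/n^2$, and since $n \geq 15$ we have $A(\tilde{s}) \leq 8\sqrt{3}\pi^2/225 \approx 0.6079 < 0.6866$, so a fortiori $A(s) \leq 2\sqrt{3}\pi^2/n^2$ also satisfies the hypothesis. (One should double-check the numerical value $8\sqrt 3 \pi^2/225$; this is the one genuinely necessary inequality, and it is comfortably satisfied — the bound $n\geq 15$ is presumably chosen precisely so that this works.) Hence \refthm{HK-estimate} applies to both $s$ and $\tilde{s}$.

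For the second assertion, I would use the monotonicity established in \reflem{monotonic}: since $f$ and $\tilde{f}$ are strictly decreasing on $[z_0,1]$ with $f(1) = \tilde f(1) = 0$, the inverse functions are decreasing, so $f^{-1}(A(s)) \geq f^{-1}(A(\tilde s))$ and it suffices to bound the preimages of the two specific values $A(s) \leq 2\sqrt 3 \pi^2/n^2$ and $A(\tilde s) = 8\sqrt 3\pi^2/n^2$ from below by $0.83$ and from above by $1$. The upper bound by $1$ is automatic since $A > 0$ forces the preimage to be strictly less than $1$ (as $f(1)=0$), and by \reflem{monotonic} the preimages lie in $[z_0,1] \subset [0,1]$. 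For the lower bound, because $f^{-1}$ and $\tilde f^{-1}$ are decreasing, $f^{-1}(A(s)) \geq f^{-1}(2\sqrt3\pi^2/225)$ and similarly $\tilde f^{-1}(A(\tilde s)) \geq \tilde f^{-1}(8\sqrt3\pi^2/225)$, monotonically increasing as $n$ grows; so it is enough to check the worst case $n = 15$. Concretely, I would verify $f(0.83) \geq 2\sqrt{3}\pi^2/225$ and $\tilde f(0.83) \geq 8\sqrt 3 \pi^2/225$; by monotonicity of $f$ and $\tilde f$ these inequalities force $f^{-1}(A(s)) \geq 0.83$ and $\tilde f^{-1}(A(\tilde s)) \geq 0.83$ respectively. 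Evaluating $f$ and $\tilde f$ at $0.83$ requires numerically computing the integrals $\int_1^{0.83} F(w)\,dw$ and $\int_1^{0.83}\tilde F(w)\,dw$ appearing in their definitions; since $F$ and $\tilde F$ are explicit rational functions with no poles on $[0.83,1]$ (the relevant denominators $(z+1)(z^2+1)^2$ and $(z+1)(z^2+1)(z^2+2z-1)(z^2-2z-1)$ are nonzero there — note $z^2+2z-1 < 0$ and $z^2 - 2z -1 < 0$ on this range but neither vanishes), these are routine definite integrals of smooth functions, amenable to rigorous numerical evaluation.

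The main obstacle is purely computational bookkeeping: one must carry out the numerical evaluations of $f(0.83)$ and $\tilde f(0.83)$ with enough rigor and accuracy to confirm the desired inequalities, and similarly pin down $f(1/\sqrt 3)$ and $8\sqrt 3\pi^2/225$. There is no conceptual difficulty — everything reduces to monotonicity (already in hand from \reflem{monotonic}) plus interval arithmetic on a handful of elementary integrals — but one should be careful that the chosen endpoint $0.83$ is genuinely below the preimages for \emph{all} $n \geq 15$, which is why reducing to the single worst case $n=15$ via monotonicity of $f^{-1}, \tilde f^{-1}$ is the cleanest organizing principle.
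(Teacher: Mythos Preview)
Your proposal is correct and follows essentially the same approach as the paper: bound $A(\tilde s)$ and $A(s)$ via \reflem{slopes}, then use the monotonicity from \reflem{monotonic} to reduce to the worst case $n=15$ and a numerical check. The paper computes the inverse values directly, obtaining $\tilde f^{-1}(8\sqrt 3\pi^2/225) \approx 0.832$ and $f^{-1}(2\sqrt 3\pi^2/225) \approx 0.952$, which is logically equivalent to your proposed forward check that $f(0.83)$ and $\tilde f(0.83)$ exceed the respective $A$-values.

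One minor slip in a parenthetical: you assert $z^2+2z-1 < 0$ on $[0.83,1]$, but in fact its positive root is $\sqrt 2-1 \approx 0.414$, so $z^2+2z-1 > 0$ there. Your conclusion that the denominator of $\tilde F$ has no zeros on $[0.83,1]$ is nonetheless correct.
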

%

\begin{proof}
By \reflem{slopes}, we know that when $n \geq 15$, the inverse norms of $s$ and $\tilde{s}$ satisfy
\[ 
0 < 4 A(s) \leq A(\tilde{s}) \leq \frac{8\sqrt{3}\pi^2}{15^2} \approx 0.60781.
\]
This means that both $s$ and $\tilde{s}$ satisfy hypotheses of \refthm{HK-estimate}. By \reflem{monotonic}, it also means that the values $\hat{z} = f^{-1} (A(s))$ and $\tilde{z} = \tilde{f}^{-1} (A(\tilde{s}))$ are well-defined. Since $f^{-1}$ and $\tilde{f}^{-1}$ are monotonically decreasing by \reflem{monotonic}, we know that
\[
\tilde{z} = \tilde{f}^{-1}(A(\tilde{s})) \geq \tilde{f}^{-1} \left( \frac{8\sqrt{3}\pi^2}{15^2} \right) = 0.83211 \dots
\]
and
\[\hat{z} = f^{-1}(A(s)) \geq f^{-1} \left( \frac{2\sqrt{3}\pi^2}{15^2} \right) = 0.95182 \dots . \qedhere
\]
\end{proof}

\begin{lemma}\label{Lem:4f}
On the interval $I = [0.83,1]$, we have $\tilde{f}(z) \leq 4 f(z)$, with a strict inequality in the interior of $I$.
\end{lemma}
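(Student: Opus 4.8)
The plan is to reduce the inequality $\tilde{f}(z) \le 4f(z)$ on $I = [0.83,1]$ to a statement about the integrands $F$ and $\tilde{F}$ that define these functions, and then to verify that statement by elementary calculus. Write out both functions explicitly: since
\[
f(z) = \frac{3.3957(1-z)}{\exp\left(\int_1^z F(w)\,dw\right)}
\quad\text{and}\quad
\tilde{f}(z) = \frac{3.3957(1-z)}{\exp\left(\int_1^z \tilde{F}(w)\,dw\right)},
\]
the factor $3.3957(1-z)$ is common and nonnegative on $I$, so the desired inequality is equivalent to
\[
\exp\left(\int_1^z \tilde{F}(w)\,dw\right) \geq \tfrac{1}{4}\exp\left(\int_1^z F(w)\,dw\right),
\]
i.e. to
\[
\int_z^1 \bigl(\tilde{F}(w) - F(w)\bigr)\,dw \leq \log 4
\]
for all $z \in I$ (after flipping the limits of integration and being careful about signs). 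Since the left side is an increasing function of $z-1$ in absolute value — more precisely, it is maximized over $z \in [0.83,1]$ at the endpoint where $1-z$ is largest, namely $z = 0.83$, provided $\tilde{F} - F$ has constant sign there — it suffices to check the single numerical inequality at $z = 0.83$, together with the sign claim.

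The key steps, in order, are: (1) form the difference $D(w) = \tilde{F}(w) - F(w)$ and put it over a common denominator; both $F$ and $\tilde{F}$ are rational functions, so $D$ is rational, and one should factor its denominator $(w+1)(w^2+1)^2(w^2+2w-1)(w^2-2w-1)$ and examine where it vanishes on $[0.83,1]$. Note $w^2+2w-1$ vanishes at $\sqrt{2}-1 \approx 0.414 < 0.83$ and $w^2-2w-1$ vanishes at $1+\sqrt{2} > 1$, so the denominator has no zeros on $I$; (2) determine the sign of the numerator of $D$ on $I$ — this is a polynomial inequality that can be checked by evaluating at the endpoints and bounding the derivative, or simply by noting the numerator is a polynomial of bounded degree with no roots in $[0.83,1]$; (3) conclude that $D$ has constant sign on $I$, so that $\int_z^1 D(w)\,dw$ is monotonic in $z$ and attains its extreme value on $I$ at $z = 0.83$; (4) bound $\bigl|\int_{0.83}^1 D(w)\,dw\bigr|$ by, say, $(1 - 0.83)\cdot \max_{w \in I}|D(w)|$ and check that this is at most $\log 4 = 1.3863\ldots$ — generously enough that a crude bound on $\max|D|$ suffices; (5) track the strict inequality: since $D$ is not identically zero on the interior of $I$, the integral inequality is strict for $z < 1$, giving $\tilde{f}(z) < 4f(z)$ on the interior of $I$, with equality only at $z = 1$ where both sides vanish.

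The main obstacle is purely bookkeeping: correctly combining the two rational functions $F$ and $\tilde{F}$ and controlling the sign and magnitude of their difference $D(w)$ on $I$ without error. The denominators differ — $F$ has $(z+1)(z^2+1)^2$ while $\tilde{F}$ has $(z+1)(z^2+1)(z^2+2z-1)(z^2-2z-1)$ — so forming $D$ requires multiplying through by $(z^2+1)(z^2+2z-1)(z^2-2z-1)$ in one term and $(z^2+1)$ in the other, producing a numerator polynomial of degree around $10$. There is nothing deep here, but it is the step where a slip would invalidate the argument; one should either carry it out by hand with care or verify it symbolically. Everything downstream — the monotonicity of the integral, the comparison with $\log 4$ — is then immediate from the fact that $I$ is a short interval on which $D$ is a smooth, bounded, single-signed function.
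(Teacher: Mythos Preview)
Your approach is correct and genuinely different from the paper's. The paper proves the lemma by a convexity argument: it checks the inequality at the two endpoints of $I$, then computes $f''$ and $\tilde{f}''$ explicitly and shows that $4f$ is concave down while $\tilde f$ is concave up on $I$, so each lies on the appropriate side of the common secant line. That route requires controlling the sign of a degree--$12$ polynomial coming from $\tilde f''$.

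Your reduction to $\int_z^1 D(w)\,dw \le \log 4$, with $D = \tilde F - F$, is cleaner once you actually do the algebra you flagged as ``bookkeeping'': over the common denominator one finds
\[
D(z) \;=\; \frac{-8z\,(z^4+4z^2-1)}{(z^2+1)^2\,(z^4-6z^2+1)}\,,
\]
after a cancellation of $(z+1)$. On $I$ the numerator is negative (since $z^4+4z^2-1>0$ for $z>z_0$) and the denominator is negative (since $z^4-6z^2+1<0$ for $z^2\in(3-2\sqrt2,\,3+2\sqrt2)$), so $D>0$ and your monotonicity step goes through. A crude bound gives $D\le 8\cdot 4/\bigl((1.6889)^2\cdot 2.65\bigr) < 4.3$ on $I$, hence $\int_{0.83}^1 D < 0.17\cdot 4.3 < 0.74 < \log 4$, with room to spare; strictness for $z<1$ is immediate and equality at $z=1$ comes from the common factor $(1-z)$. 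Compared with the paper, you trade second--derivative computations for a single rational--function subtraction; the payoff is that the sign analysis reduces to the same low--degree factors $z^4+4z^2-1$ and $z^4-6z^2+1$ already appearing elsewhere in the section, rather than the degree--$12$ polynomial the paper must handle.
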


\begin{proof}
Note that $\tilde{f}(1) = 4 f(1) = 0$, while
\[
\tilde{f}(0.83) = 0.61577 \ldots < 1.75736 \ldots = 4f(0.83).
\]
Thus the desired inequality is true at the endpoints of $I$. To prove that $\tilde{f}(z) < 4 f(z)$ in the interior of $I$, we show that $4f$ is concave down on $I$ (hence above its secant line), while $\tilde{f}$ is concave up (hence below its secant line).

In \reflem{monotonic}, we computed $f'$ and $\tilde{f}'$. Now, we differentiate again:
 \[f''(z) = - 2 \cdot 3.3957 \, \frac{ z^8+6 z^6+32 z^4+10 z^2-1}{(z^2+1)^4 (z+1)}
\, \exp\left(\int_z^1 F(w)\,dw  \right) .
\]
All the factors in the denominator are positive. The polynomial $g(z) = z^8+6 z^6+32 z^4+10 z^2-1$ is increasing on $I$, and satisfies $g(0.83)>20$. Hence $g(z)>0$ on $I$, and, we conclude that
$f''(z)<0$ on $I$.

Next, we compute that
	\[\tilde{f}''(z) = -2\cdot 3.3957 \, 	 
	 \frac{z^{12}-4z^{10}+17z^8 -248z^6 +203 z^4 - 36z^2 +3}
	 {(z+1)(z^2+1)^2 (z^2+2z-1)^2 (z^2-2z-1)^2}
	\, \exp\left(\int_z^1 \tilde{F}(w)\,dw\right) .
	\]  
Since all factors in the denominator are positive, it follows that $\tilde f''(z) >0$ on
	$I$ if
	and only if 
\[
q(z) = z^{12}-4z^{10}+17z^8 -248z^6 +203 z^4 - 36z^2
	  +3 <0
\]
on $I$.  We check this by analyzing  $r(z) = q(\sqrt{z})$.

\begin{claim}
The function $r(z) = z^6 - 4z^5 + 17z^4 -248z^3 +203z^2- 36z +3$ is strictly negative on the interval $[0.83^2, 1]$. 
\end{claim}

We prove this claim by  ``downward induction'' on the derivatives of $r$.
In fact, we will show that 
 that for $k \in \{1,2,3,4\}$, $r^{(k)}(z)$
	  has no zeros on the larger interval $[0.5,1]$.  
	  
First, check that  $r^{(4)}$, a quadratic function, is always positive on
	  $[0.5,1]$.  This implies that $r^{(3)}$ is increasing on
	  $[0.5,1]$.  It follows from the fact that $r^{(3)}(1)<0$ that
	  $r^{(3)}$ is negative on the interval $[0.5,1]$, hence $r''$ is decreasing on $[0.5,1]$.  Since $r''(0.5)
	  < 0$, $r''$ is negative on $[0.5,1]$, hence $r'$ is decreasing.  Since $r'(0.5)<0$,  $r'$ is
	  negative on $[0.5,1]$.   Therefore, $r$ is decreasing on $[0.5,1]$. Since $r(0.83^2) \approx -3.226 < 0$, this proves the claim. 
	  
\smallskip
	  
The claim shows that $\tilde{f}''(z) > 0$ on $I$. Since we have already seen that $4f''(z) < 0$ on $I$, the proof is complete.
\end{proof}

\begin{lemma}\label{Lem:integral}
Let $n \geq 15$, and let $s$ and $\tilde{s}$ be slopes as in \refdef{slopes}. Let
$ \tilde{z} = \tilde{f}^{-1} (A(\tilde{s}))$ and $\hat{z} = f^{-1}(A(s))$. Then $\tilde{z} < \hat{z}$.
\end{lemma}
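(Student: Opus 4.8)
The plan is to obtain \reflem{integral} as a short deduction from the three preceding lemmas, with no new computation. By \refdef{slopes}, together with the well-definedness of the inverse functions established in \reflem{monotonic} (which applies here because of the size estimates in \reflem{interval}), the numbers $\tilde z$ and $\hat z$ are characterized by the identities $\tilde f(\tilde z) = A(\tilde s)$ and $f(\hat z) = A(s)$. Thus proving $\tilde z < \hat z$ amounts to comparing $\tilde f^{-1}$ evaluated at $A(\tilde s)$ with $f^{-1}$ evaluated at $A(s)$, and the mechanism is that $A(\tilde s)$ is at least four times as large as $A(s)$ (by \reflem{slopes}), whereas $\tilde f$ is at most four times as large as $f$ on the interval $I$ where these points live (by \reflem{4f}).

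Concretely, I would argue as follows. First, $\tilde z \in I = [0.83,1]$ by \reflem{interval}, and $\tilde z \neq 1$ because $\tilde f(1) = 0$ while $A(\tilde s) = 8\sqrt{3}\,\pi^2/n^2 > 0$; hence $\tilde z$ lies in the interior of $I$, so the strict form of \reflem{4f} gives $\tilde f(\tilde z) < 4 f(\tilde z)$. Combining this with the identity $\tilde f(\tilde z) = A(\tilde s)$, the inequality $A(\tilde s) \geq 4 A(s)$ from \reflem{slopes}, and the identity $A(s) = f(\hat z)$, we obtain the chain $4 f(\tilde z) > \tilde f(\tilde z) = A(\tilde s) \geq 4 A(s) = 4 f(\hat z)$, hence $f(\tilde z) > f(\hat z)$. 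Finally, by \reflem{monotonic} the function $f$ is strictly decreasing on $J \supseteq I$, and both $\tilde z$ and $\hat z$ lie in $I$ by \reflem{interval}, so $f(\tilde z) > f(\hat z)$ forces $\tilde z < \hat z$, as desired.

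There is essentially no obstacle remaining at this stage: the analytic heavy lifting — the monotonicity and invertibility of $f$ and $\tilde f$, the pointwise inequality $\tilde f \leq 4f$ on $I$, and the slope-length computations bounding $A(s)$ and $A(\tilde s)$ — has already been carried out in \reflem{monotonic}, \reflem{4f}, and \reflem{slopes}. The only two points that require a moment of care are (i) checking that $\tilde z$ is genuinely interior to $I$, so that the strict version of \reflem{4f} applies, and (ii) ensuring that $\tilde z$ and $\hat z$ lie in a common interval on which $f$ is strictly monotone, so that the final inversion is legitimate; both follow from \reflem{interval}. In a sense the real difficulty was \reflem{4f}, with its downward-induction argument on the derivatives of $r(z)$, but that has already been dispatched, leaving the present lemma as a routine assembly of the pieces.
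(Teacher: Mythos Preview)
Your proof is correct and follows essentially the same route as the paper: combine $4A(s) \leq A(\tilde s)$ from \reflem{slopes} with the strict inequality $\tilde f(\tilde z) < 4 f(\tilde z)$ from \reflem{4f} to obtain $A(s) < f(\tilde z)$, then invoke the strict monotonicity of $f$ (equivalently, apply $f^{-1}$) to conclude $\tilde z < \hat z$. Your explicit check that $\tilde z$ lies in the interior of $I$ is a nice touch that the paper leaves implicit.
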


\begin{proof}
When $n \geq 15$, \reflem{interval} says that $\tilde{z}$ and $\hat{z}$ lie in the interval $I = [0.83, 1]$. 
Therefore,
\[
4 A(s) \leq A(\tilde{s}) = \tilde{f}(\tilde{z}) < 4 f ( \tilde{z} ),
\]
where the first inequality is by \reflem{slopes} and the second inequality is by \reflem{4f}.
Since $f^{-1}$ is a decreasing function and $A(s) < f(\tilde{z})$, we conclude that
\[
\hat{z} = f^{-1}(A(s)) > \tilde{z}. \qedhere
\]
\end{proof}

Finally, we can prove \refprop{vol-change}, the main result of this
section.

\begin{proof}[Proof of \refprop{vol-change}]
As in \refdef{slopes}, let $\tilde{s}$ be the slope on a cusp of ${\tt m004}$ that produces the orbifold $\PP_n$, and let $s \neq \tilde{s}$ be any other slope on  ${\tt m003}$ or ${\tt m004}$ that is $n$ times a primitive slope.
Now,
  \[
 \Delta V(s) \leq   \int_{\hat{z}}^1 u(z)\,dz
 <  \int_{\tilde{z}}^1 u(z) \,dz
 <  \int_{\tilde{z}}^1 l(z) \,dz
\leq \Delta V(\tilde{s}) ,
 \] 
where the first and last inequalities are \refthm{HK-estimate}, the second inequality is \reflem{integral}, and the third inequality is \reflem{integrands}.
\end{proof}

We can also complete the proof of \refthm{main}.

\begin{proof}[Proof of  \refthm{main}]
Let $\OO$ be a volume minimizer in $L_n$, where $n \geq 4$.
If $4 \leq n \leq 14$, \refthm{n-less14} proves that $\OO = \PP_n$. If $n \geq 15$, \refthm{few-parents} shows that $\OO$ is a Dehn filling of  ${\tt m003}$ or ${\tt m004}$. Now, \refprop{vol-change} shows that among those Dehn fillings that produce $n$--torsion, $\PP_n$ is the unique one of lowest volume.
\end{proof}

\section{Symmetry groups of manifolds}\label{Sec:symmetry}

In this section, we apply the prior results of this paper (\refthm{main},
\refcor{3torsion}, and \refthm{multicusped-drill}) to relate the volume of a
hyperbolic manifold to the size of its symmetry group. Recall, from the
introduction,
that bounds of this type were one of the main motivations driving the search for the lowest volume orbifold, compact and non-compact   \cite{gehring-martin:minimal-orbifold, marshall-martin:minimal-orbifold2, meyerhoff:small-orbifold}. 

\begin{definition}\label{Def:group-np}
Let $G$ be a group of isometries of a finite-volume hyperbolic $3$--manifold $M$. It is a well-known consequence of Mostow rigidity that $G$ is finite \cite[Corollary 5.7.4]{thurston:notes}. We define the following numerical invariants of $G$:
\begin{itemize}
\item $n=n(G)$ is the smallest order of a non-trivial element of $G$ stabilizing a point of $M$, or else $1$ if $G$ acts freely.
\item $p=p(G)$ is the smallest prime number dividing $|G|$, or else $1$ if $G$ is trivial.
\end{itemize}
We emphasize that $n(G)$ depends on how $G$ acts on $M$, whereas $p(G)$ depends only on $|G|$.
\end{definition}

The following theorem is a generalization of \refthm{volume-special-case}, stated in the introduction.

\begin{theorem}\label{Thm:closed-mfld-volume}
Let $M$ be an orientable hyperbolic $3$--manifold of finite volume, and let $G$ be a group of orientation-preserving isometries of $M$.
 Let $n$ and $p$ be as in \refdef{group-np}. 
 Then
\begin{equation*}
\vol(M) \: \geq \: |G| \cdot w_n
\: \geq \: |G| \cdot w_p,
\quad
\mbox{where}
\quad
w_i = \begin{cases}
\vol(\OO_\min) \geq 0.03905 & i=2 \\
0.2371 & i=3 \\
\vol(\PP_4) \geq 0.5074 & i=4 \\
\vol(\PP_5) \geq 0.9372 & i=5 \\
\vol(M_W) \geq 0.9427 & \mbox{otherwise}.
\end{cases}
\end{equation*}
Furthermore, the above estimate is sharp for infinitely many pairs $(M,G)$.
\end{theorem}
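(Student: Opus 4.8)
The plan is to pass from the manifold $M$ to the quotient orbifold $\OO = M/G$, which is hyperbolic of volume $\vol(M)/|G|$, and to locate $\OO$ inside one of the families $L_i$ whose volume minimizers we now understand. First I would dispose of the case $n = 1$: if $G$ acts freely then $\OO$ is a manifold, so $\vol(\OO) \geq \vol(M_W)$ by the Gabai--Meyerhoff--Milley theorem, giving $\vol(M) \geq |G|\cdot \vol(M_W) = |G|\cdot w_1 \geq |G| \cdot w_p$ since $w_p \leq \vol(M_W)$ always. If $n \geq 2$, some nontrivial isometry of prime order $q$ (for $q$ the smallest prime dividing $n$, hence $q \geq p$) fixes a point of $M$, so its image generates a $\ZZ_q$ in $\pi_1(\OO)$ with nonempty fixed locus; thus $\Sigma_\OO \neq \emptyset$ and every torsion order of $\OO$ is at least $n$, i.e.\ $\OO \in L_n \subseteq L_p$. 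Now the bounds $w_i$ are read off directly: for $i = n \geq 5$ other than a prime power issue we want $\vol(\OO) \geq \vol(M_W)$, and for $i \in \{2,3,4,5\}$ we want $\vol(\OO) \geq w_i$.

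The heart of the argument is the case analysis establishing $\vol(\OO) \geq w_n$. For $n = 2$ this is the Gehring--Marshall--Martin bound $\vol(\OO) \geq \vol(\Omin)$ for any orientable $3$--orbifold with torsion. For $n = 3$ this is exactly \refcor{3torsion}. For $n = 4$ and $n = 5$ this is \refthm{main}: a volume minimizer in $L_n$ is $\PP_n$, so $\vol(\OO) \geq \vol(\PP_n)$. The remaining case is $n \geq 6$ (more precisely, $n \notin \{2,3,4,5\}$), where we must show $\vol(\OO) \geq \vol(M_W)$; this does \emph{not} follow from $\vol(\PP_n) \geq \vol(M_W)$ for large $n$ (indeed $\vol(\PP_n) < 2V_3 \approx 2.03$ converges to a value above $\vol(M_W)$, so in fact $\vol(\PP_n) > \vol(M_W)$ for $n \geq 6$ by \reftab{MomBound}) — so in this regime \refthm{main} gives $\vol(\OO) \geq \vol(\PP_6) > \vol(M_W)$ directly, and I'd simply check $\vol(\PP_6) = 1.22128\dots > 0.9427$. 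Then $\vol(M) = |G|\cdot\vol(\OO) \geq |G|\cdot w_n$, and the chain $w_n \geq w_p$ follows since $p \leq q \leq n$ (the smallest prime dividing $|G|$ divides the order of any element, in particular of a point-stabilizer of order $n$), combined with the fact that $i \mapsto w_i$ is, after the initial segment, nondecreasing in the relevant sense: one checks $w_2 < w_3 < w_4 < w_5 < \vol(M_W)$, so whichever of $p, n$ is larger has the larger (or equal) $w$-value, except one must be slightly careful that $p$ could be $2$ while $n$ is odd — but then $w_p = \vol(\Omin)$ is the smallest of all the constants, so $w_n \geq w_p$ trivially.

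For the sharpness claim, the plan is to exhibit, for each of the relevant values of $n$, an infinite family of pairs $(M_k, G_k)$ with $M_k/G_k$ equal (or arbitrarily close in volume) to the extremal orbifold. Concretely: take finite regular manifold covers $M_k \to \PP_n$ for $n = 4, 5$ (which exist since $\pi_1(\PP_n)$ is residually finite and has a torsion-free subgroup of finite index by Selberg's lemma, and one can pass to a normal such subgroup), with $G_k$ the deck group, so $\vol(M_k) = |G_k|\cdot\vol(\PP_n) = |G_k|\cdot w_n$ exactly; since $\pi_1(\PP_n)$ is infinite one gets infinitely many such $k$. For the generic case ($n \geq 6$ or free actions) take regular covers of the Weeks manifold itself, or note that $G$ trivial with $M = M_W$ and its finite covers give $\vol(M) = \vol(M_W)$ — but here we want the bound $|G|\cdot w_p$ with $w_p$ possibly smaller; the cleanest sharp examples for the $p$-version are covers of $\Omin$ (for $p = 2$) and of $\PP_5$ (for $p = 5$), i.e.\ of the orbifolds realizing each $w_p$. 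The main obstacle, and the only genuinely nontrivial input, is already supplied by \refthm{main} and \refcor{3torsion}; the remaining work — the quotient-orbifold reduction, the numerical comparisons among the $w_i$, and constructing residually-finite covers for sharpness — is routine, the last point requiring only a citation to Selberg's lemma and the residual finiteness of finitely generated linear groups.
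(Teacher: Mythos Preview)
Your proposal is correct and follows essentially the same approach as the paper: split into the free and non-free cases, apply Gabai--Meyerhoff--Milley in the free case and the chain $\OO = M/G \in L_n$ together with \refthm{main}, \refcor{3torsion}, and Gehring--Marshall--Martin in the non-free case, then invoke residual finiteness (Selberg) for sharpness. Your introduction of the auxiliary prime $q$ dividing $n$ is an unnecessary detour (the paper simply observes $n \geq p$ directly, since $n$ divides $|G|$), and your sentence beginning ``this does \emph{not} follow'' is confusingly phrased since you immediately supply exactly the argument you seem to be disclaiming; but the content is the same.
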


\begin{proof}
We consider two cases, depending on whether $G$ acts freely or with fixed points. 

If  $G$ acts freely, then $n=1$. The quotient is $M/G = N$, an orientable hyperbolic $3$--manifold. Thus, by the work of Gabai, Meyerhoff, and Milley \cite{gmm:smallest-cusped, milley}, 
\[
\vol(M) \: = \: |G| \cdot \vol(N) \: \geq \: |G| \cdot \vol(M_W)
\: = \: |G| \cdot w_1
\: \geq \: |G| \cdot w_p
\]
for any $p$. 

Otherwise, if the action of $G$ has fixed points, we have $n \geq p \geq 2$, hence $w_n \geq w_p$. The quotient is an orbifold $M / G = \OO \in L_n$. Combining \refthm{main}, \refcor{3torsion}, and the work of Gehring, Marshall, and Martin \cite{gehring-martin:minimal-orbifold, marshall-martin:minimal-orbifold2} gives
\[
\vol(M) = |G|\cdot  \vol(\OO) \geq 
\begin{cases}
|G| \cdot \vol(\OO_\min) & n=2 \\
|G| \cdot0.2371 & n=3 \\
|G| \cdot \vol(\PP_n) & \mbox{otherwise}.
\end{cases}
\]
Since $\vol(\PP_n) > \vol(M_W)$ for $n > 5$, we conclude that 
\[
\vol(M) \geq |G| \cdot w_n  \geq |G| \cdot w_p.
\]

To see the sharpness of this estimate, let $X$ be one of $\Omin$, $\PP_4$, $\PP_5$, or $M_W$.
Since Kleinian groups are residually finite \cite{alperin:selberg-lemma},
there are infinitely many epimorphisms $\varphi: \pi_1 X \to G$, with finite image and torsion-free kernel. For each such $\varphi$, we can let $M$ be the cover of $X$ corresponding to $\ker \varphi$. The theorem will then be sharp for $(M,G)$.
\end{proof}

There are similar (and sharper) lower bounds if one assumes that the manifold $M$ has cusps. Here, Meyerhoff \cite{meyerhoff:small-orbifold} and Adams \cite{adams:small-volume-orbifolds} have identified the first several non-compact $3$--orbifolds of lowest volume, and Adams has also identified the smallest volume $3$--orbifolds that could occur as the quotient of a one-cusped manifold \cite{adams:limit-volume-orbifolds}.
Futer, Kalfagianni, and Purcell have found lower bounds on the volume of a symmetric link complement in $S^3$, provided the symmetry group is cyclic \cite{fkp:conway}. The following result strengthens and generalizes those estimates.

\begin{theorem}\label{Thm:cusped-mfld-volume}
Let $M$ be a cusped orientable hyperbolic $3$--manifold, and let $G$ be any group of orientation-preserving isometries of $M$.
 Let $n$ and $p$ be as in \refdef{group-np}.  Then
\begin{equation*}
\vol(M) \: \geq \: |G| \cdot u_n
\: \geq \: |G| \cdot u_p,
\quad
\mbox{where}
\quad
u_i = \begin{cases}
V_3 / 12 \geq 0.08457 & i=2 \\
V_3 / 2 \geq 0.50747 & i=3 \\
0.69524 & i=4 \\
1.45034 & i=5 \\
2.00606 & i=6 \\
2 V_3 \geq 2.02988 & \mbox{otherwise}.
\end{cases}
\end{equation*}
Furthermore, if $M$ has a single cusp, then $\vol(M) \geq |G| \cdot 0.30532\dots,$ where the decimal in the equation is $1/12$ the volume of a regular ideal octahedron.
\end{theorem}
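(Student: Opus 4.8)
The plan is to argue by cases, first on whether $G$ acts freely on $M$ and, when it does not, on the integer $n=n(G)$ of \refdef{group-np}. The key preliminary observation is that $n(G)$ is always $1$ or a prime: if a nontrivial $g\in G$ of composite order $m=ab$ with $1<a<m$ fixes a point $x\in M$, then $g^{a}$ also fixes $x$ and has the smaller order $b$, so the least order of a point-stabilizing element must be prime. In particular the cases $i=4,6$ of the displayed formula are never realized by $i=n(G)$ (they are listed only so that $u_i$ is defined for every $i$), and this primality is exactly what makes $u_i=2V_3$ correct for all $i\geq 7$. Since $i\mapsto u_i$ is nondecreasing on $i\geq 2$ and $u_1=2V_3\geq u_p$ for every $p$, once we have shown $\vol(M)\geq |G|\cdot u_n$ the inequality $\vol(M)\geq |G|\cdot u_p$ is automatic: either $n=1$, or $n$ is a prime dividing $|G|$, so $p\leq n$ and hence $u_p\leq u_n$.

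If $G$ acts freely then $n=1$ and $N=M/G$ is an orientable cusped hyperbolic $3$--manifold, so Cao--Meyerhoff \cite{cao-meyerhoff} (see \refthm{moms}) gives $\vol(N)\geq 2V_3$ and hence $\vol(M)=|G|\,\vol(N)\geq |G|\cdot 2V_3=|G|\cdot u_1$. Otherwise $\OO=M/G$ is a hyperbolic orbifold in $L_n$, non-compact because $M$ is, and it suffices to show $\vol(\OO)\geq u_n$. For $n=2$, Meyerhoff's identification of the minimal-volume cusped orientable $3$--orbifold \cite{meyerhoff:small-orbifold} gives $\vol(\OO)\geq V_3/12=u_2$. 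For every prime $n\geq 5$, $\OO$ has no $2$-- or $3$--torsion, so by \reflem{LargeNLink} it has no $S^2(3,3,3)$ cusp and must be a link orbifold; being non-compact it satisfies the hypotheses of \refthm{multicusped-drill}, which bounds $\vol(\OO)$ below by the next-to-last column of \reftab{MomBound}. For $n=5$ this entry is $u_5=1.45034$, and for every prime $n\geq 7$ it exceeds $2V_3$ (it is $2.16958$ at $n=7$ and at least $2.96$ for $n\geq 11$), which is $u_n$.

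The case $n=3$ is the delicate one, and I expect the real work to lie here. Now $\OO\in L_3$ is non-compact with no $2$--torsion, so by \reflem{LargeNLink} it is either not a link orbifold, in which case \reflem{turnover-volume} gives $\vol(\OO)\geq V_3/2=u_3$ immediately, or a link orbifold whose cusps are all tori. In the latter case the plan is to drill $\Sigma_\OO$: the resulting manifold $M_\OO$ then has at least two cusps, so Agol's two-cusp bound \cite{agol:2cusped} gives $\vol(M_\OO)\geq V_8=3.6638\ldots$, and combining this with the link-orbifold Agol--Dunfield drilling estimate \cite[Proposition 4.3]{atkinson-futer} (which underlies \refprop{drillbound} and is available for $n=3$) and the sharpened $n=3$ embedded-tube bound \cite[Theorem A.1]{atkinson-futer} should give $\vol(\OO)\geq V_3/2$. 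This is the step where the numerical margin is smallest, which is why the crude estimate $\vol(\OO)\geq 0.2371$ of \refcor{3torsion} does not suffice.

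Finally, for the single-cusp refinement: if $M$ has exactly one cusp then $G$ preserves it and $\OO=M/G$ also has exactly one cusp. When $G$ acts freely, $\vol(M)=|G|\,\vol(\OO)\geq |G|\cdot 2V_3$, which already exceeds $|G|\cdot V_8/12$. When $G$ has fixed points, $\OO$ is a one-cusped orientable orbifold arising as the quotient of a one-cusped manifold, and Adams's determination of the minimal volume of such an orbifold \cite{adams:limit-volume-orbifolds} gives $\vol(\OO)\geq V_8/12=0.30532\ldots$; multiplying by $|G|$ completes the proof.
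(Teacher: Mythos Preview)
Your overall strategy matches the paper's: split on whether $G$ acts freely, and when it does not, on the value of $n=n(G)$. Your observation that $n(G)$ is always $1$ or a prime is correct and tidy --- if $g$ of order $m$ fixes a point then so does $g^{m/q}$ of prime order $q$ --- and it lets you discard $n=4,6$ outright, whereas the paper simply treats $4\le n\le 6$ via \refthm{multicusped-drill} without making this observation. The free-action case, the case $n=2$, the prime cases $n\ge 5$, and the single-cusp ``furthermore'' are all handled essentially as the paper does.

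The genuine gap is at $n=3$, in the subcase where $\OO=M/G$ is a non-compact link orbifold (so its cusps are tori). You propose to drill $\Sigma_\OO$, invoke Agol's two-cusp bound $\vol(M_\OO)\ge V_8$, and feed the $n=3$ tube radius of \cite[Theorem~A.1]{atkinson-futer} into the Agol--Dunfield inequality --- but you leave this as ``should give'' rather than carrying it out, and the numbers do not cooperate. The best drilling-based bound for $L_3$ link orbifolds is $0.2371$ (this is \cite[Theorem~1.6]{atkinson-futer}, already using the sharpest $n=3$ tube radius together with $\vol(M_\OO)\ge 2V_3$); replacing $2V_3$ by $V_8$ only scales this up by a factor of $V_8/(2V_3)\approx 1.80$, yielding roughly $0.43$, still short of the target $V_3/2\approx 0.5075$. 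The paper closes this subcase by a different route: it cites \cite[Lemma~2.2]{atkinson-futer}, which gives $\vol(\OO)\ge V_3/2$ directly for a hyperbolic $3$--orbifold with a torus cusp. That lemma is the ingredient you are missing.
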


\begin{proof}
As in the proof of \refthm{closed-mfld-volume}, we consider two cases, depending on how $G$ acts.

If $G$ acts freely, then $n=1$, hence $u_1 \geq u_p$ for any $p$. The quotient $N = M/G$ is a cusped orientable hyperbolic manifold. By the work of Cao and Meyerhoff \cite{cao-meyerhoff}, $\vol(N) \geq 2V_3$.

If $G$ acts with fixed points, then $n \geq p \geq 2$, hence $u_n \geq u_p$. The quotient $M/G$ is a non-compact hyperbolic orbifold $\OO \in L_n$,
whose volume can be estimated as follows. If $n=2$, then $\vol(\OO) \geq V_3/12$ by the work of 
 Meyerhoff \cite{meyerhoff:small-orbifold} and Adams \cite{adams:small-volume-orbifolds}. If $n=3$, then a cusp cross-section of $\OO$ is either a torus or $S^2(3,3,3)$; in either case we have the estimate $\vol(\OO) \geq V_3 / 2$, by
 \cite[Lemma 2.2]{atkinson-futer}  and \reflem{turnover-volume} of this paper.  If $4 \leq n \leq 6$, we turn to \refthm{multicusped-drill} and obtain volume estimates from \reftab{MomBound}. 
 
When $n \geq 7$, any orbifold $\OO \in L_n$ is also an element of $L_7$. By \reftab{MomBound}, the volume estimate for a non-compact orbifold $\OO \in L_7$ is larger than $2.1 > 2V_3 \approx 2.03$. Therefore, for $n \geq 7$, the smaller volume bound comes from the case where $M/G$ is a manifold.

The ``furthermore'' statement is a theorem of Adams \cite[Corollary 8.3]{adams:limit-volume-orbifolds}.
\end{proof}

%
%
%
%

\bibliographystyle{hamsplain}
\bibliography{./atkinson-futer_references}
\end{document}